\numberwithin{equation}{section}
\newtheorem{theorem}{Theorem}[section]
\newtheorem{lemma}{Lemma}[section]
\theoremstyle{definition}
\theoremstyle{remark}
\newcommand{\Rnum}[1]{\uppercase\expandafter{\romannumeral #1\relax}}
\newcommand{\mr}[1]{\mathrm{#1}}
\newcommand{\mb}[1]{\mathbb{#1}}
\newcommand{\mc}[1]{\mathcal{#1}}
\DeclareMathOperator{\Av}{Av} 
\def\clap#1{\hbox to 0pt{\hss#1\hss}}
\title{Sharp upper bounds for integral quantities related to the Bellman function of three variables of the dyadic maximal operator}
\author{Eleftherios N. Nikolidakis}
\date{\today}
\begin{document}
\maketitle

\begin{abstract}
We obtain sharp upper bounds for integral quantities related to the Bellman function of three integral variables of the dyadic maximal operator.
\end{abstract}

\section{Introduction} \label{sec:0}
The dyadic maximal operator on $\mb R^n$ is a useful tool in analysis and is defined by
\begin{equation} \label{eq:0p1}
	\mc M_d\varphi(x) = \sup\left\{ \frac{1}{|S|} \int_S |\varphi(u)|\,\mr du: x\in S,\ S\subseteq \mb R^n\ \text{is a dyadic cube} \right\},
\end{equation}
for every $\varphi\in L^1_\text{loc}(\mb R^n)$, where $|\cdot|$ denotes the Lebesgue measure on $\mb R^n$, and the dyadic cubes are those formed by the grids $2^{-N}\mb Z^n$, for $N=0, 1, 2, \ldots$.\\
It is well known that it satisfies the following weak type (1,1) inequality
\begin{equation} \label{eq:0p2}
	\left|\left\{ x\in\mb R^n: \mc M_d\varphi(x) > \lambda \right\}\right| \leq \frac{1}{\lambda} \int_{\left\{\mc M_d\varphi > \lambda\right\}} |\varphi(u)|\,\mr du,
\end{equation}
for every $\varphi\in L^1(\mb R^n)$, and every $\lambda>0$,
from which it is easy to get the following  $L^p$-inequality
\begin{equation} \label{eq:0p3}
	\|\mc M_d\varphi\|_p \leq \frac{p}{p-1} \|\varphi\|_p,
\end{equation}
for every $p>1$, and every $\varphi\in L^p(\mb R^n)$.
It is easy to see that the weak type inequality \eqref{eq:0p2} is the best possible. For refinements of this inequality one can consult \cite{6}.

It has also been proved that \eqref{eq:0p3} is best possible (see \cite{1} and \cite{2} for general martingales and \cite{18} for dyadic ones).
An approach for studying the behaviour of this maximal operator in more depth is the introduction of the so-called Bellman functions which play the role of generalized norms of $\mc M_d$. Such functions related to the $L^p$-inequality \eqref{eq:0p3} have been precisely identified in \cite{4}, \cite{5} and \cite{11}. For the study of the Bellman functions of $\mc M_d$, we use the notation $\Av_E(\psi)=\frac{1}{|E|} \int_E \psi$, whenever $E$ is a Lebesgue measurable subset of $\mb R^n$ of positive measure and $\psi$ is a real valued measurable function defined on $E$. We fix a dyadic cube  $Q$ and define the localized maximal operator $\mc M'_d\varphi$ as in \eqref{eq:0p1} but with the dyadic cubes $S$ being assumed to be contained in $Q$. Then for every $p>1$ we let
\begin{equation} \label{eq:0p4}
	B_p(f,F)=\sup\left\{ \frac{1}{|Q|} \int_Q (\mc M'_d\varphi)^p: \Av_Q(\varphi)=f,\ \Av_Q(\varphi^p)=F \right\},
\end{equation}
where $\varphi$ is nonnegative in $L^p(Q)$ and the variables $f, F$ satisfy $0<f^p\leq F$. By a scaling argument it is easy to see that \eqref{eq:0p4} is independent of the choice of $Q$ (so we may choose
$Q$ to be the unit cube $[0,1]^n$).
In \cite{5}, the function \eqref{eq:0p4} has been precisely identified for the first time. The proof has been given in a much more general setting of tree-like structures on probability spaces.

More precisely we consider a non-atomic probability space $(X,\mu)$ and let $\mc T$ be a family of measurable subsets of $X$, that has a tree-like structure similar to the one in the dyadic case (the exact definition will be given in Section \ref{sec:2}).
Then we define the dyadic maximal operator associated to $\mc T$, by
\begin{equation} \label{eq:0p5}
	\mc M_{\mc T}\varphi(x) = \sup \left\{ \frac{1}{\mu(I)} \int_I |\varphi|\,\mr \; d\mu: x\in I\in \mc T \right\},
\end{equation}
for every $\varphi\in L^1(X,\mu)$, $x\in X$.

This operator is related to the theory of martingales and satisfies essentially the same inequalities as $\mc M_d$ does. Now we define the corresponding Bellman function of four variables of $\mc M_{\mc T}$, by
\begin{multline} \label{eq:0p6}
	B_p^{\mc T}(f,F,L,k) = \sup \left\{ \int_K \left[ \max(\mc M_{\mc T}\varphi, L)\right]^p\mr \; d\mu: \varphi\geq 0, \int_X\varphi\,\mr \; d\mu=f, \right. \\  \left. \int_X\varphi^p\,\mr \; d\mu = F,\ K\subseteq X\ \text{measurable with}\ \mu(K)=k\right\},
\end{multline}
the variables $f, F, L, k$ satisfying $0<f^p\leq F $, $L\geq f$, $k\in (0,1]$.
The exact evaluation of \eqref{eq:0p6} is given in \cite{5}, for the cases where $k=1$ or $L=f$. In the first case the author (in \cite{5}) precisely identifies the function $B_p^{\mc T}(f,F,L,1)$ by evaluating it in a first stage for the case where $L=f$. That is he precisely identifies $B_p^{\mc T}(f,F,f,1)$ (in fact $B_p^{\mc T}(f,F,f,1)=F \omega_p (\frac{f^p}{F})^p$, where                         $\omega_p: [0,1] \to [1,\frac{p}{p-1}]$ is the inverse function $H^{-1}_p$, of $H_p(z) = -(p-1)z^p + pz^{p-1}$). 

The proof of the above mentioned evaluation relies on a one-parameter integral inequality which is proved by arguments based on a linearization of the dyadic maximal operator. More precisely the author in \cite{5} proves that the inequality

\begin{equation}\label{eq:0p7}
	F\geq \frac{1}{(\beta+1)^{p-1}} f^p + \frac{(p-1)\beta}{(\beta+1)^p} \int_X (M_{\mathcal{T}}\varphi)^p \; d\mu,
\end{equation}
is true for every non-negative value of the parameter $\beta$ and sharp for one that depends on $f$, $F$ and $p$, namely for $\beta=\omega_p (\frac{f^p}{F})-1$. This gives as a consequence an upper bound for $B_p^{\mc T}(f,F,f,1)$, which after several technical considerations is proved to be best possible.Then  by using several calculus arguments the author in \cite{5} provides the evaluation of $B_p^{\mc T}(f,F,L,1)$ for every $L\geq f$. 

Now in \cite{11} the authors give a direct proof of the evaluation of $B_p^{\mc T}(f,F,L,1)$ by using alternative methods. Moreover in the second case, where $L=f$, the author (in \cite{5}) uses the evaluation of $B_p^{\mc T}(f,F,f,1)$ and provides the evaluation of the more general $B_p^{\mc T}(f,F,f,k)$, $k\in (0,1]$.

Our aim (in the future) is to use the results of this article in order to approach the following Bellman function problem (of three integral variables)

\begin{multline} \label{eq:0p8}
	B_{p,q}^{\mc T}(f,A,F) = \sup \left\{ \int_X \left(\mc M_{\mc T}\varphi\right)^p\mr \; d\mu: \varphi\geq 0, \int_X\varphi\,\mr \; d\mu=f, \right. \\  \left. \int_X\varphi^q\,\mr \; d\mu = A,\ \int_X\varphi^p\,\mr \; d\mu = F\right\},
\end{multline}
where $1<q<p$, and the variables $f,A,F$ lie in the domain of definition of the above problem.

In this article we prove that whenever $0<\frac{x^q}{\kappa^{q-1}}<y\leq x^{\frac{p-q}{p-1}}\cdot z^{\frac{q-1}{p-1}}\;\Leftrightarrow\; 0<s_1^{\frac{q-1}{p-1}}\leq s_2<1$, (where $s_1,s_2)$ are defined below), we find a constant $t=t(s_1,s_2)$ for which if $h:(0,\kappa]\longrightarrow\mb{R}^{+}$ satisfies  $\int_{0}^{\kappa}h=x$\,,\; $\int_{0}^{\kappa}h^q=y$ and $\int_{0}^{\kappa}h^p=z$ then

	$$\int_{0}^{\kappa}\bigg(\frac{1}{t}\int_{0}^{t}h\bigg)^p dt\leq t^p(s_1,s_2)\cdot\int_{0}^{\kappa}h^p,$$
where $t(s_1,s_2)=t$ is the greatest element of $\big[{1,t(0)}\big]$ for which $F_{s_1,s_2}(t)\leq 0$ and $F_{s_1,s_2}$ is defined in Section 4. Moreover for each such fixed $s_1,s_2$ 
\[t=t(s_1,s_2)=\min\Big\{{t(\beta)\;:\;\beta\in\big[0,\tfrac{1}{p-1}\big]}\Big\}\]
where $t(\beta)=t(\beta,s_1,s_2)$ is defined by equation (\ref{eq:3p4}). That is we find a constant $t=t(s_1,s_2)$ for which the above inequality  is satisfied for all $h:(0,\kappa]\longrightarrow\mb{R}^{+}$ as mentioned above. Note that $s_1,s_2$ depend by a certain way on $x,y,z$, namely $s_1=\frac{x^p}{\kappa^{p-1}z}$,  $s_2=\frac{x^q}{\kappa^{q-1}y}$ and $F_{s_1,s_2}(\cdot)$ is given in terms of $s_1,s_2$.

We need to mention that the extremizers for the standard Bellman function $B_p^{\mc T}(f,F,f,1)$ have been studied in \cite{7}, and in \cite{9} for the case $0<p<1$. Also in \cite{8} the extremal sequences of functions for the respective Hardy operator problem have been studied.  Additionally further study of the dyadic maximal operator can be seen in \cite{10,11} where symmetrization principles for this operator are presented, while other approaches for the determination of certain Bellman functions are given in \cite{13,14,15,16,17}. Moreover results related to applications of dyadic maximal operators can be seen in \cite{12}.

\bigskip

\section{Preliminaries} \label{Section 2}

The following theorem has been proven in \cite{3}

\begin{theorem}\label{thm:1p1}
Let $q\in(1,p)$, $f>0$ and $\varphi\in L^p(X,\mu)$, with $\int_{X}\varphi=f$\,. Then the inequality 
\begin{align*}
\int_{X}(\mc{M}_{\mc{T}}\varphi)^p\,d\mu&\leq\frac{p(\beta+1)^q}{G}\int_{X}(\mc{M}_{\mc{T}}\varphi)^{p-q}\varphi^q\,d\mu+\frac{(p-q)(\beta+1)}{G}\,f^p\,+\\
&\quad+\frac{p(q-1)\beta}{G}\,f^{p-q}\int_{X}(\mc{M}_{\mc{T}}\varphi)^q\,d\mu-\frac{p(\beta+1)^q}{G}\,f^{p-q}\int_{X}\varphi^q\,d\mu\,,
\end{align*}
\makebox[\linewidth][s]{is true for every value of $\beta>0$, where $G$ is defined by $G=G(p,q,\beta)=$}\\
$q(p-1)(\beta+1)-p(q-1)$. 
\end{theorem}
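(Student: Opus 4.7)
The plan is to imitate Melas's one-parameter linearization used for inequality (\ref{eq:0p7}), enriched to track the additional mixed-exponent integrals appearing here. By the standard rearrangement correspondence for tree-structured dyadic maximal operators (see \cite{10,11}), it suffices to work on the one-dimensional model $X=(0,1]$ with Lebesgue measure, where $\varphi$ is replaced by its nonincreasing rearrangement $g$ and $\mc{M}_{\mc{T}}\varphi$ by the Hardy running average $H(t)=t^{-1}\int_{0}^{t}g(s)\,ds$. In this setting $H$ is nonincreasing, $H\geq g$ pointwise, $H(1)=f$, and almost everywhere $tH'(t)=g(t)-H(t)$.

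Multiplying this derivative identity by $rH^{r-1}$ and integrating by parts on $(0,1]$ yields the family
\[
(r-1)\int_{0}^{1}H^{r}\,dt=r\int_{0}^{1}gH^{r-1}\,dt-f^{r}\qquad (r>1).
\]
Applying it with $r=p$ and $r=q$ converts the integrals $\int H^{p}$ and $\int H^{q}$ appearing in the claim into integrals of $gH^{p-1}$ and $gH^{q-1}$. Substituting, noting that $G=q(p-1)(\beta+1)-p(q-1)=(p-q)+q(p-1)\beta$, and using the algebraic decomposition $H^{p-1}-f^{p-1}=H^{q-1}(H^{p-q}-f^{p-q})+f^{p-q}(H^{q-1}-f^{q-1})$, the claim reduces (with $\Delta=H^{p-q}-f^{p-q}\geq 0$) to the equivalent inequality
\begin{multline*}
G\int_{0}^{1}gH^{q-1}\Delta\,dt+(p-q)f^{p-q}\int_{0}^{1}g(H^{q-1}-f^{q-1})\,dt\\
\leq (p-1)(\beta+1)^{q}\int_{0}^{1}g^{q}\Delta\,dt.
\end{multline*}

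The reduced inequality is then closed by passing to the level-set picture. Setting $M(s)=|\{H>s\}|$ and $\Phi_{q}(s)=\int_{\{H>s\}}g^{q}\,dt$, the layer-cake representation (combined with the second integration by parts identity used to eliminate $Z=\int gH^{q-1}$) recasts the target as
\begin{multline*}
G\int_{f}^{\infty}s^{p-1}M(s)\,ds\leq (p-q)(\beta+1)^{q}\int_{f}^{\infty}s^{p-q-1}\Phi_{q}(s)\,ds\\
+q(q-1)\beta f^{p-q}\int_{f}^{\infty}s^{q-1}M(s)\,ds.
\end{multline*}
The central slice estimate is Hölder's inequality in the form $\Phi_{q}(s)\geq s^{q}M(s)$, which follows from the tree-theoretic identity $\int_{\{H>s\}}g=sM(s)$. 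Splitting $(\beta+1)^{q}=1+q\beta+[(\beta+1)^{q}-1-q\beta]$ isolates a Bernoulli contribution $1+q\beta$ (nonnegative, used against the base Hölder bound $s^{p-q-1}\Phi_{q}\geq s^{p-1}M$) and a higher-order remainder $(\beta+1)^{q}-1-q\beta\geq 0$ (absorbed against the Hölder defect $\Phi_{q}(s)-s^{q}M(s)$).

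The main obstacle is the final absorption step: balancing the Hölder defect against the remaining $q(p-1)\beta$ and $q(q-1)\beta$ contributions requires a careful per-slice inequality, effectively a second-order Taylor expansion of $x\mapsto x^{q}$ at $x=s$ applied to $g$ on $\{H>s\}$. The appearance of $(\beta+1)^{q}$ rather than $1+q\beta$ on the RHS is sharp precisely because Bernoulli's inequality becomes tight in the extremal configuration, and this sharpness forces the specific form $G=q(p-1)(\beta+1)-p(q-1)$; any weaker coefficient on the LHS would fail for the extremal step-function profile of $g$.
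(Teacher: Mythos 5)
The paper itself offers no proof of Theorem \ref{thm:1p1}; it is quoted from \cite{3}, where inequalities of this type are established directly on $(X,\mu)$ by linearizing $\mc M_{\mc T}$ over the tree (in the spirit of the proof of \eqref{eq:0p7} in \cite{5}). Your proposal instead begins by claiming that, by the rearrangement results of \cite{10,11}, it suffices to prove the statement in the one--dimensional model with $\varphi$ replaced by its decreasing rearrangement $g$ and $\mc M_{\mc T}\varphi$ by $H(t)=t^{-1}\int_0^t g$. This first step is already a genuine gap: symmetrization gives bounds of the form $\int_X G_1(\mc M_{\mc T}\varphi)G_2(\varphi)\,d\mu\leq\int_0^1 G_1\big(\tfrac1t\int_0^t g\big)G_2(g(t))\,dt$, i.e.\ passing to the rearranged picture can only \emph{increase} integrals involving the maximal function. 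That is the correct direction for the left-hand side $\int_X(\mc M_{\mc T}\varphi)^p\,d\mu$, but the terms $\int_X(\mc M_{\mc T}\varphi)^{p-q}\varphi^q\,d\mu$ and $\int_X(\mc M_{\mc T}\varphi)^q\,d\mu$ occur on the \emph{right-hand side} of the claimed inequality with positive coefficients, so your reduction needs lower bounds by their one-dimensional counterparts, which rearrangement does not give and which are false in general (already for $\varphi=2\chi_{(1/2,1]}$ on the dyadic tree of $(0,1]$ one has $\int_X(\mc M_{\mc T}\varphi)^q\,d\mu<\int_0^1 H^q\,dt$ strictly). Note also that the paper's logic runs in the opposite direction: the one-dimensional inequality \eqref{eq:1p1} is \emph{deduced from} Theorem \ref{thm:1p1}, so proving only the 1D statement would not recover the theorem.

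Even if one takes the one-dimensional inequality as the goal, your argument is not closed. The integration-by-parts identity $(r-1)\int_0^1H^r\,dt=r\int_0^1 gH^{r-1}\,dt-f^r$ and the layer-cake bound $\Phi_q(s)\geq s^qM(s)$ are fine, but the decisive step --- absorbing the H\"older defect against the remainder $(\beta+1)^q-1-q\beta$ so as to produce exactly the constant $G=q(p-1)(\beta+1)-p(q-1)$ --- is only announced as ``the main obstacle'' and described as ``effectively a second-order Taylor expansion''; no per-slice inequality is stated, let alone proved, and it is precisely there that the theorem lives. So the proposal has two substantive gaps: an invalid reduction to the one-dimensional model, and an unproven core estimate. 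A correct route is the one of \cite{3}: work on $(X,\mu)$ itself, decompose $\{\mc M_{\mc T}\varphi>\lambda\}$ into tree elements via the standard stopping-time linearization, and derive the inequality from the resulting elementwise estimates.
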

By using the proof of the result in \cite{11}, one can now see that the following inequality is an immediate consequence:

If $h:(0,1]\longrightarrow\mb{R}^{+}$ is non-increasing, $h\in L^{p}$ with $\int_{0}^{1}h=f$,\, $\int_{0}^{1}h^q=A$, then for every $\beta>0$ the following inequality is true:
\begin{align}\label{eq:1p1}
\int_{0}^{1}\bigg(\frac{1}{t}\int_{0}^{t}h\bigg)^p dt&\leq\frac{p(\beta+1)^q}{G}\int_{0}^{1}\bigg(\frac{1}{t}\int_{0}^{t}h\bigg)^{p-q}h^q(t)\,dt+\frac{(p-q)(\beta+1)}{G}f^p\,+\nonumber\\
&+\frac{p(q-1)\beta}{G}f^{p-q}\int_{0}^{1}\bigg(\frac{1}{t}\int_{0}^{t}h\bigg)^q dt-\frac{p(\beta+1)^q}{G}f^{p-q}\!\cdot\!A
\end{align}
where $G$ is as in the statement of Theorem (\ref{thm:1p1}).

By using (\ref{eq:1p1}) and a simple linear change of variables (in the variable $t$), one easily shows that if $\kappa\in(0,1]$ and  $h:(0,\kappa]\longrightarrow\mb{R}^{+}$ is $L^p$-integrable and non-increasing, then the following inequality is true:

\begin{align}\label{eq:1p2}
	\int_{0}^{\kappa}\bigg(\frac{1}{t}\int_{0}^{t}h\bigg)^p dt&\leq\frac{p(\beta+1)^q}{G}\int_{0}^{\kappa}\bigg(\frac{1}{t}\int_{0}^{t}h\bigg)^{p-q}h^q(t)\,dt\,+\nonumber\\
	&\quad+\frac{(p-q)(\beta+1)}{G}\cdot\frac{1}{\kappa^{p-1}}\bigg(\int_{0}^{\kappa}h\bigg)^p\,+\nonumber\\
	&\qquad+\frac{p(q-1)\beta}{G}\bigg(\frac{1}{\kappa}\int_{0}^{\kappa}h\bigg)^{p-q}\cdot\int_{0}^{\kappa}\bigg(\frac{1}{t}\int_{0}^{t}h\bigg)^q dt\,-\nonumber\\
	&\hspace{0.9cm}-\frac{p(\beta+1)^q}{G}\bigg(\frac{1}{\kappa}\int_{0}^{\kappa}h\bigg)^{p-q}\cdot \int_{0}^{\kappa}h^q\,.
\end{align}

Suppose now that $(f,A,F)$ are given such that there exists $\kappa_0\in(0,1]$ with  
\begin{align}\label{eq:1p3}
\omega_q\Big({\frac{f^q}{\kappa_0^{q-1} A}}\Big)=\omega_p\Big({\frac{f^p}{\kappa_0^{p-1} F}}\Big):=\varepsilon_0\,.
\end{align}
Define $g:(0,\kappa_0]\longrightarrow\mb{R}^{+}$ by $g(t)=\vartheta\,t^{-1+\frac{1}{\varepsilon_0}}$,\;$t\in(0,\kappa_0]$, where $\varepsilon_0$ is defined by (\ref{eq:1p3}) and $\vartheta$ is such that 
\begin{align}\label{eq:1p4}
\int_{0}^{\kappa_0}g=f\quad\Leftrightarrow\quad\vartheta=\frac{f}{\kappa_0^{1/\varepsilon_0}\varepsilon_0}\,.
\end{align}

We notice now the following 
\begin{enumerate}
\item[{A)}]
By using (\ref{eq:1p3}), (\ref{eq:1p4}) and the form that $g$ has, we easily see that the following relations are satisfied:
\[\int_{0}^{\kappa_0}g^q=A\,,\qquad \int_{0}^{\kappa_0}g^p=F\,.\]
Moreover, after some standard simple calculations, we see that, for every  $\kappa\in(0,\kappa_0]$, the following identity holds true:
\[\frac{\big({\textstyle\int_{0}^{\kappa}g}\big)^p}{\kappa^{p-1}\textstyle\int_{0}^{\kappa}g^p}=\frac{f^p}{\kappa_0^{p-1}F}=\frac{\big({\textstyle\int_{0}^{\kappa_0}g}\big)^p}{\kappa_0^{p-1}\textstyle\int_{0}^{\kappa_0}g^p}\,.\]
\item[{B)}]
Moreover for the function that is constructed in comment (A) above, the following equality is true:
\begin{align}\label{eq:1p5}
\frac{1}{t}\int_{0}^{t}g(u)\,du=\varepsilon_0\cdot g(t)\,,\quad \forall\,t\in(0,\kappa_0]\,.
\end{align}
By using (\ref{eq:1p5}), it is straight forward to see that we obtain equality in (\ref{eq:1p2}), if we replace $h$ by $g$ and $\kappa$ by $\kappa_0$.
\end{enumerate}

\bigskip

\section{A first approach} \label{sec:2}

As we have already noticed in in the previous section, if $h:(0,\kappa]\longrightarrow\mb{R}^{+}$ is $L^p$-integrable and non-increasing, where $0< \kappa\leq 1$, the following inequality is true:
\begin{align}\label{eq:2p1}
	\int_{0}^{\kappa}\bigg(\frac{1}{t}\int_{0}^{t}h\bigg)^p dt&\leq\frac{p(\beta+1)^q}{G}\int_{0}^{\kappa}\bigg(\frac{1}{t}\int_{0}^{t}h\bigg)^{p-q}h^q(t)\,dt\,+\nonumber\\
	&\quad+\frac{(p-q)(\beta+1)}{G}\cdot\frac{1}{\kappa^{p-1}}\bigg(\int_{0}^{\kappa}h\bigg)^p\,+\nonumber\\
	&\qquad+\frac{p(q-1)\beta}{G}\cdot\frac{1}{\kappa^{p-q}}\bigg(\int_{0}^{\kappa}h\bigg)^{p-q}\cdot\int_{0}^{\kappa}\bigg(\frac{1}{t}\int_{0}^{t}h\bigg)^q dt\,-\nonumber\\
	&\hspace{0.9cm}-\frac{p(\beta+1)^q}{G}\cdot\frac{1}{\kappa^{p-q}}\bigg(\int_{0}^{\kappa}h\bigg)^{p-q}\cdot \int_{0}^{\kappa}h^q\,,
\end{align}
where $G=G(p,q,\beta)=q(p-1)(\beta+1)-p(q-1)$.

It is now well known, that the inequalities 
\begin{align}\label{eq:2p2}
\int_{0}^{\kappa}\bigg(\frac{1}{t}\int_{0}^{t}h\bigg)^q dt&\leq\int_{0}^{\kappa}h^q\,\omega_q\bigg({\frac{\big({\textstyle\int_{0}^{\kappa}h}\big)^q}{\kappa^{q-1}\textstyle\int_{0}^{\kappa}h^q}}\bigg)^q\leq\nonumber\\
&\leq \frac{\beta+1}{\beta}\cdot\frac{(\beta+1)^{q-1}{\textstyle\int_{0}^{\kappa}h^q}-\frac{1}{\kappa^{q-1}}\big({\textstyle\int_{0}^{\kappa}h}\big)^q}{q-1}
\end{align}
are true, for every $\beta>0$, where the right-hand side of (\ref{eq:2p2}) attains its minimum value for $\beta=\omega_q\Big({\frac{\big({\textstyle\int_{0}^{\kappa}h}\big)^q}{\kappa^{q-1}\textstyle\int_{0}^{\kappa}h^q}}\Big)-1$ and this value is the middle term in the above chain of inequalities. We substitute the estimate described in  (\ref{eq:2p2}) in (\ref{eq:2p1}) and we get: 
\begin{small}
\begin{align}\label{eq:2p3}
	&\int_{0}^{\kappa}\bigg(\frac{1}{t}\int_{0}^{t}h\bigg)^p dt\leq\frac{p(\beta+1)^q}{G}\int_{0}^{\kappa}\bigg(\frac{1}{t}\int_{0}^{t}h\bigg)^{p-q}h^q(t)\,dt\,+\nonumber\\
	&\qquad+\frac{(p-q)(\beta+1)}{G}\cdot\frac{1}{\kappa^{p-1}}\bigg(\int_{0}^{\kappa}h\bigg)^p-\frac{p(\beta+1)^q}{G}\int_{0}^{\kappa}h^q\cdot\frac{1}{\kappa^{p-q}}\bigg(\int_{0}^{\kappa}h\bigg)^{p-q}\,+\nonumber\\
	&+\frac{p(q-1)\beta}{G}\,\frac{1}{\kappa^{p-q}}\bigg(\int_{0}^{\kappa}h\bigg)^{p-q}\frac{\beta+1}{\beta(q-1)}\bigg[{(\beta+1)^{q-1}\int_{0}^{\kappa}h^q-\frac{1}{\kappa^{q-1}}\bigg(\int_{0}^{\kappa}h\bigg)^{q}}\bigg]
\end{align}
\end{small}
\!\!\!We apply now H\"{o}lder's inequality on the first term on the right side of (\ref{eq:2p3}) and, denoting  $I:=\int_{0}^{\kappa}\big(\frac{1}{t}\int_{0}^{t}h\big)^p dt$, we get 
\begin{align}\label{eq:2p4}
&I\leq\frac{p(\beta+1)^q}{G}\,I^{\frac{p-q}{p}}\bigg({\int_{0}^{\kappa}h^p}\bigg)^{\frac{q}{p}}+\frac{(p-q)(\beta+1)}{G}\cdot\frac{1}{\kappa^{p-1}}\bigg(\int_{0}^{\kappa}h\bigg)^{p}\,+\nonumber\\
	&+\frac{p(q-1)\beta}{G}\,\frac{1}{\kappa^{p-q}}\bigg(\int_{0}^{\kappa}h\bigg)^{p-q}\frac{\beta+1}{\beta(q-1)}\bigg[{(\beta+1)^{q-1}\int_{0}^{\kappa}h^q-\frac{1}{\kappa^{q-1}}\bigg(\int_{0}^{\kappa}h\bigg)^{q}}\bigg]\,-\nonumber\\
	&\hspace{4.3cm}-\frac{p(\beta+1)^q}{G}\int_{0}^{\kappa}h^q\cdot\frac{1}{\kappa^{p-q}}\bigg(\int_{0}^{\kappa}h\bigg)^{p-q}\,.
\end{align}
We denote now $\int_{0}^{\kappa}h=x$\,,\; $\int_{0}^{\kappa}h^q=y$ and $\int_{0}^{\kappa}h^p=z$\,. Then (\ref{eq:2p4}) becomes:
\begin{align}\label{eq:2p5}
	I&\leq\frac{p(\beta+1)^q}{G}\,I^{\frac{p-q}{p}}\cdot z^{\frac{q}{p}}+\frac{(p-q)(\beta+1)}{G}\cdot\frac{1}{\kappa^{p-1}}\cdot x^{p}\,+\nonumber\\
	&\;\;+\frac{p(q-1)\beta}{G}\,\frac{1}{\kappa^{p-q}}\,x^{p-q}\frac{(\beta+1)^q}{\beta(q-1)}\,y -\frac{p(q-1)\beta}{G}\,\frac{1}{\kappa^{p-q}}\,x^{p-q}\frac{\beta+1}{\beta(q-1)}\,\frac{1}{\kappa^{q-1}}\,x^{q}\,-\nonumber\\
	&\hspace{5.5cm}-\frac{p(\beta+1)^q}{G}\,y\cdot\frac{1}{\kappa^{p-q}}\,x^{p-q}\,.
\end{align}
Then (\ref{eq:2p5}), gives after simple cancellations of terms, the following inequality
\begin{align}\label{eq:2p6}
I\leq\frac{p(\beta+1)^q}{G}\,I^{\frac{p-q}{p}}\cdot z^{\frac{q}{p}}-\frac{q}{G}\,(\beta+1)\,\frac{x^p}{\kappa^{p-1}}\,.
\end{align}
Set now $J:=\frac{\int_{0}^{\kappa}\big(\frac{1}{t}\int_{0}^{t}h\big)^p dt}{\int_{0}^{\kappa}h^p}=\frac{I}{z}$\,, and (\ref{eq:2p6}) gives:
\begin{align}\label{eq:2p7}
	J\leq\frac{p(\beta+1)^q}{G}\,J^{1-\frac{q}{p}}-\frac{q}{G}\,(\beta+1)\,\frac{x^p}{\kappa^{p-1}z}\,.
\end{align}
Denoting by $s_1=s_1(x,z)=\frac{x^p}{\kappa^{p-1}z}$\,, (\ref{eq:2p7}) becomes:
\begin{align}\label{eq:2p8}
	J^{1-\frac{q}{p}}-A_{\beta}J\geq\frac{q}{p}\,\frac{1}{(\beta+1)^{q-1}}\,s_1\,,  
\end{align}
where 
\begin{align}\label{eq:2p9}
A_{\beta}&:=\frac{G}{p(\beta+1)^{q}}=\frac{p(q-1)\beta+(p-q)(\beta+1)}{p(\beta+1)^{q}}=\nonumber\\
&=\frac{p-q}{p}\,\frac{1}{(\beta+1)^{q-1}}+\frac{(q-1)\beta}{(\beta+1)^{q}}\,.
\end{align}
Write $h_{\beta}(y)=y^{p-q}-A_{\beta}y^p$\,, for $y\geq1$, thus by setting $J'=J^{\frac{1}{p}}$, (\ref{eq:2p8}) gives 
\[h_{\beta}(J')\geq\frac{q}{p}\,\frac{1}{(\beta+1)^{q-1}}\,s_1\,.\]
But the function $h_{\beta}:[1,+\infty)\longrightarrow\mb{R}$ is strictly decreasing as we shall see now. Indeed
\begin{align}\label{eq:2p10}
h_{\beta}'(y)&=(p-q)y^{p-q-1}-A_{\beta}\cdot p y^{p-1}=\nonumber\\
 &=y^{p-1}\big({y^{-q}(p-q)-p\cdot A_{\beta}}\big)\cong y^{-q}(p-q)-p\cdot A_{\beta}\,,
\end{align}
where the symbol $\cong$ means that the respective quantities involved are of the same sign. Now (\ref{eq:2p10}), since $y\geq1$ gives
\begin{align}\label{eq:2p11}
h'_{\beta}(y)\underset{y\geq1}{\underset{\raisebox{1pt}{$\sim$}}{\leq}}(p-q)-p\cdot A_{\beta}\,,
\end{align}
with the obvious meaning for the symbol $\underset{\raisebox{1pt}{$\sim$}}{\leq}$\,.

We now prove that $A_{\beta}>\frac{p-q}{p},\; \forall\,\beta\in\big(0,\frac{1}{p-1}\big)$\,. We write  \[A_{\beta}=\frac{p-q}{p}\,\frac{1}{(\beta+1)^{q-1}}+\,\frac{(q-1)\beta}{(\beta+1)^{q}}\,,\] using (\ref{eq:2p9}) and then by setting $\mc{K}(\beta)$ the right side of the above equality we get  $\mc{K}(0)=\frac{p-q}{p}$, and $\mc{K}'(\beta)>0,\; \forall\,\beta\in\big(0,\frac{1}{p-1}\big)$\,. Indeed: 
\begin{align*}
\mc{K}'(\beta)&=\bigg({\frac{G(\beta)}{p(\beta+1)^{q}}}\bigg)'_{\beta}\cong q(p-1)(\beta+1)^{q}\cdot p-G\cdot p\cdot q\cdot(\beta+1)^{q-1}\cong\\
 &\cong (p-1)(\beta+1)-G=(p-1)(\beta+1)-q(p-1)(\beta+1)+p(q-1)=\\
 &=(\beta+1)\big({(p-1)-qp+q}\big)+p(q-1)=\\
 &=(\beta+1)\big({(p-1)-q(p-1)}\big)+p(q-1)=\\
&=-(p-1)(q-1)(\beta+1)+p(q-1)\stackrel{0\leq \beta<\frac{1}{p-1}}{>}\\
&>-(p-1)(q-1)\,\frac{p}{p-1}+p(q-1)=0\,.
\end{align*}
Then
\[\mc{K}(\beta)=A_{\beta}>\mc{K}(0)=\frac{p-q}{p}\,,\quad\forall\,\beta\in\big(0,\tfrac{1}{p-1}\big)\,.\]
Thus by (\ref{eq:2p11}) above, we set $h'_{\beta}(y)<0\,,\;\;\forall\,y\geq1$, thus $h_{\beta}:[1,+\infty)\longrightarrow\big({-\infty,h_{\beta}(1)}\big]$ is strictly decreasing.

Note that as we have seen above:
\[h_{\beta}(1)\geq h_{\beta}(J')\geq \frac{q}{p}\,\frac{1}{(\beta+1)^{q-1}}\,s_1\,,\]
(since obviously $J'\geq 1$), thus if we denote by $L:=\frac{q}{p}\,\frac{1}{(\beta+1)^{q-1}}\,s_1$ we have that:
\begin{align*}
h_{\beta}(J')\geq L\quad&\Rightarrow\quad J'\leq h^{-1}_{\beta}(L)\\
    &\Rightarrow \quad \int_{0}^{\kappa}\bigg(\frac{1}{t}\int_{0}^{t}h\bigg)^p dt\leq t_1^p\int_{0}^{\kappa}h^p\,,
\end{align*}
where  $t_1=t_1(\beta)$, is the value of $h^{-1}_{\beta}(L)$, that is 
\begin{align}\label{eq:2p12}
t_1=h^{-1}_{\beta}(L)\quad&\Leftrightarrow\quad h_{\beta}(t_1)=L\nonumber\\
&\Leftrightarrow\quad t_1^{p-q}-A_{\beta}t_1^p=L:=\frac{q}{p}\,\frac{1}{(\beta+1)^{q-1}}\,s_1\,.
\end{align}
Until now we have proved that for every $\beta\in\big(0,\tfrac{1}{p-1}\big)$ there exists $t_1(\beta)\geq1$ such that for every $h$ as above the following inequality is true
\begin{align}\label{eq:2p13}
\int_{0}^{\kappa}\bigg(\frac{1}{t}\int_{0}^{t}h\bigg)^p dt\leq t_1^p(\beta)\cdot\int_{0}^{\kappa}h^p\,,
\end{align}
where $t_1=t_1(\beta)$ is given by the equation (\ref{eq:2p12}). We wish now to find the minimum  of  $\big\{{t_1(\beta)\;:\;\beta\in\big(0,\tfrac{1}{p-1}\big)}\}$, in order to find the exact estimate that (\ref{eq:2p13}) gives for the integrals that are involved. \\
First of all if we set $\beta=\beta_0:=\omega_p(s_1)-1$, we notice that $t_1(\beta_0)=\omega_p(s_1)$. For the proof of this fact we move in the following manner: We prove the equality:
\begin{align}\label{eq:2p14}
(t')^{p-q}-\frac{G(\beta_0)}{p(\beta_0+1)^{q}}(t')^{p}=\frac{q}{p}\,\frac{1}{(\beta_0+1)^{q-1}}\,s_1
\end{align}
where $t'=\beta_0+1=\omega_p(s_1)$\,. Indeed (\ref{eq:2p14}) is equivalent to:
\begin{align*}
&(\beta_0+1)^{p-q}-\frac{G(\beta_0)}{p(\beta_0+1)^{q}}(\beta_0+1)^{p}=\frac{q}{p}\,\frac{1}{(\beta_0+1)^{q-1}}\,s_1&\Leftrightarrow\\
&-\frac{G(\beta_0)}{p}(\beta_0+1)^{p-q}+(\beta_0+1)^{p-q}=\frac{q}{p}\,\frac{1}{(\beta_0+1)^{q-1}}\,H_p(\beta_0+1)&\Leftrightarrow\\
&(\beta_0+1)^{p-q}(\beta_0+1)^{q-1}\bigg[{1-\frac{G(\beta_0)}{p}}\bigg]=\frac{q}{p}\,\Big({p(\beta_0+1)^{p-1}-(p-1)(\beta_0+1)^{p}}\Big) &\Leftrightarrow\\
&1-\frac{G(\beta_0)}{p}=q-\frac{q}{p}(p-1)(\beta_0+1)&\Leftrightarrow\\
&p-G(\beta_0)=pq-q(p-1)(\beta_0+1)&\Leftrightarrow\\
&G(\beta_0)=q(p-1)(\beta_0+1)-p(q-1)\,,
\end{align*}
which is true by the definition of $G(\beta_0)$\,.
Thus by (\ref{eq:2p14}) and the definition of $t_1(\beta_0)$ we get $t_1(\beta_0)=t'=\beta_0+1=\omega_p(s_1)$\,.\\
We now show that $\min\big\{{t_1(\beta)\;:\;\beta\in\big(0,\tfrac{1}{p-1}\big)}\big\}=t_1(\beta_0)=\omega_p(s_1)$ (obviously by definition $\beta_0\in\big(0,\tfrac{1}{p-1}\big)$)\,. \\
More precisely we prove that:
\begin{align}\label{eq:2p15}
t_1(\beta)>\omega_p(s_1)\,,\quad \forall\,\beta\in\big(0,\tfrac{1}{p-1}\big)\setminus\{{\beta_0}\}
\end{align}
Let us suppose that (\ref{eq:2p15}) is not true. Then there exists $\beta\in\big(0,\tfrac{1}{p-1}\big)\setminus\{{\beta_0}\}$ for which 
\begin{align}\label{eq:2p16}
t_1(\beta)&\leq\omega_p(s_1)\quad\Rightarrow\nonumber\\
t_1(\beta)^{p-q}-\frac{G}{p(\beta+1)^{q}}\,t_1(\beta)^{p}&\geq \big({\omega_p(s_1)}\big)^{p-q}-\frac{G}{p(\beta+1)^{q}}\,\big({\omega_p(s_1)}\big)^{p}\,,
\end{align}
because of the type of the monotonicity of  $h_{\beta}(y),\,y\geq1$. Then 
\[(\ref{eq:2p16})\quad\Rightarrow\quad \frac{q}{p}\,\frac{1}{(\beta+1)^{q-1}}\,s_1\geq \big({\omega_p(s_1)}\big)^{p-q}-\frac{G}{p(\beta+1)^{q}}\,\big({\omega_p(s_1)}\big)^{p}\,.\] 
At this point we prove the following:
\begin{lemma}\label{lem:2p1}
The inequality
\begin{align}\label{eq:2p17}
\frac{q}{p}\,\frac{s_1}{(\beta+1)^{q-1}}\leq \big({\omega_p(s_1)}\big)^{p-q}-\frac{G}{p(\beta+1)^{q}}\,\big({\omega_p(s_1)}\big)^{p}
\end{align}
is true, for every $\beta\in\big(0,\tfrac{1}{p-1}\big)$, with the only case of equality being: $\beta=\omega_p(s_1)-1=:\beta_0$\,.
\end{lemma}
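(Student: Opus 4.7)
The strategy is to recast the inequality as a single-variable question in $\beta$ and exploit the fact that the defining relation $H_p\bigl(\omega_p(s_1)\bigr)=s_1$ forces simultaneous vanishing of the optimand and its first derivative at $\beta=\beta_0$. Set $w:=\omega_p(s_1)$, so that $pw^{p-1}-(p-1)w^p=s_1$. Multiplying (\ref{eq:2p17}) through by the positive quantity $p(\beta+1)^q$ and transposing, the desired inequality is equivalent to
\[
\phi(\beta)\;:=\;p(\beta+1)^{q}\,w^{p-q}\;-\;G(\beta)\,w^{p}\;-\;q(\beta+1)\,s_1\;\geq\;0,
\]
on $\bigl(0,\tfrac{1}{p-1}\bigr)$, with equality to occur only at $\beta_0:=w-1$, which lies in this interval since $w\in\bigl(1,\tfrac{p}{p-1}\bigr)$.

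The first step is to observe that $\phi$ is strictly convex on the interval. Because $G(\beta)=q(p-1)(\beta+1)-p(q-1)$ is linear in $\beta$, two differentiations yield
\[
\phi''(\beta)\;=\;pq(q-1)(\beta+1)^{q-2}\,w^{p-q},
\]
which is strictly positive since $1<q<p$ and $w\geq 1$. The second step is to check that $\phi$ has a critical point at $\beta_0$ at which it takes the value $0$. Using $\beta_0+1=w$, the expression $\phi(\beta_0)$ collapses, after collecting the $w^{p}$ and $w^{p+1}$ terms arising from $G(\beta_0)=q(p-1)w-p(q-1)$, to
\[
\phi(\beta_0)\;=\;qw\bigl[\,pw^{p-1}-(p-1)w^{p}-s_1\,\bigr]\;=\;qw\bigl[H_p(w)-s_1\bigr]\;=\;0,
\]
and the analogous mechanism, using $G'(\beta)=q(p-1)$, gives $\phi'(\beta_0)=q\bigl[H_p(w)-s_1\bigr]=0$.

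Combining the two ingredients, strict convexity together with $\phi(\beta_0)=\phi'(\beta_0)=0$ forces $\beta_0$ to be the unique global minimum of $\phi$ on $\bigl(0,\tfrac{1}{p-1}\bigr)$; hence $\phi(\beta)\geq 0$ throughout, with strict inequality for $\beta\neq\beta_0$, which is exactly the lemma. The only step that is not pure bookkeeping is the algebraic collapse at $\beta_0$, and it is not really an obstacle: it happens automatically because $H_p(w)=s_1$ is the very equation that defines $w=\omega_p(s_1)$. In other words, the choice of $\omega_p$ as $H_p^{-1}$ is built into the inequality precisely so that the two vanishing conditions at $\beta_0$ coincide, turning the inequality into a clean convexity statement.
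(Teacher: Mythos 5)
Your proof is correct. The convexity computation checks out: after clearing the positive factor $p(\beta+1)^q$, your $\phi(\beta)=p(\beta+1)^q w^{p-q}-G(\beta)w^p-q(\beta+1)s_1$ indeed has $\phi''(\beta)=pq(q-1)(\beta+1)^{q-2}w^{p-q}>0$, and both $\phi(\beta_0)=qw\bigl[H_p(w)-s_1\bigr]$ and $\phi'(\beta_0)=q\bigl[H_p(w)-s_1\bigr]$ vanish precisely because $H_p(w)=s_1$; strict convexity then gives $\phi>0$ off $\beta_0$, which is the lemma (the boundary case $s_1=1$, where $w=1$ and $\beta_0=0$ falls outside the open interval, is harmless for the statement and your argument still yields strict inequality on all of $\bigl(0,\tfrac{1}{p-1}\bigr)$; the paper is equally cavalier about this point). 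Your route is genuinely different in organization from the paper's: there, the author fixes $\beta$, substitutes $\lambda=\omega_p(s_1)$ (so $s_1=H_p(\lambda)$), verifies the equality case at $\lambda=\beta+1$ by direct computation, and then runs a chain of equivalent inequalities that terminates in the known extremal fact $H_q\bigl(\lambda/(\beta+1)\bigr)\le 1$ with equality only at $1$. You instead fix $s_1$, treat the cleared-denominator expression as a function of $\beta$, and use the tangent-line/convexity mechanism with a double zero at $\beta_0$. The two are equivalent at heart --- substituting $s_1=H_p(w)$ into your $\phi$ and factoring gives exactly $\phi(\beta)=p\,w^{p-q}(\beta+1)^q\bigl[1-H_q\bigl(w/(\beta+1)\bigr)\bigr]$, i.e.\ the paper's final inequality --- but your packaging is shorter, avoids the long equivalence chain, and makes transparent that $\beta_0$ is the unique minimizer in $\beta$ (indeed it proves the inequality for all $\beta>-1$, not only on $\bigl(0,\tfrac{1}{p-1}\bigr)$), whereas the paper's reduction keeps the role of $H_q$ and $\omega_q$ explicit, which matches the notation and the analogous manipulations used later in Section 4.
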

\begin{proof}\leavevmode
\begin{align}\label{eq:2p18}
(\ref{eq:2p17})\quad\Leftrightarrow\quad\frac{\big({\omega_p(s_1)}\big)^{p-q}}{s_1}(\beta+1)^{q-1}-\frac{G(\beta)}{p\,s_1(\beta+1)}\,\big({\omega_p(s_1)}\big)^{p}\geq \frac{q}{p}\,.
\end{align}
Set $\omega_p(s_1)=\lambda\;\;\Leftrightarrow\;\; s_1=H_p(\lambda)$, where since $s_1\in(0,1]$, we get that $\lambda\in\big[{1,\frac{p}{p-1}}\big)$. Thus (\ref{eq:2p18}) becomes 
\begin{align}\label{eq:2p19}
\Delta(\lambda):=\frac{\lambda^{p-q}}{H_p(\lambda)}(\beta+1)^{q-1}-\frac{q(p-1)(\beta+1)-p(q-1)}{p(\beta+1)}\,\frac{\lambda^p}{H_p(\lambda)}\geq\frac{q}{p}\,.
\end{align}
We check first the case of equality: We just need to prove that: $\Delta(\beta+1)=\frac{q}{p}$\,. We calculate 
\begin{align*}
\Delta(\beta+1)&=\frac{(\beta+1)^{p-1}}{p(\beta+1)^{p-1}-(p-1)(\beta+1)^{p}}\,-\\
	&\qquad-\Big({\frac{q(p-1)}{p}-\frac{q-1}{\beta+1}}\Big)\frac{(\beta+1)^{p}}{p(\beta+1)^{p-1}-(p-1)(\beta+1)^{p}}=\\
	 &=\frac{1}{p-(p-1)(\beta+1)}\biggl\{{1-\Big({\frac{q(p-1)}{p}-\frac{q-1}{\beta+1}}\Big)(\beta+1)}\biggr\}=\\
	 &=\frac{1}{p-(p-1)(\beta+1)}\Bigl\{{q-\frac{q}{p}(p-1)(\beta+1)}\Bigr\}=\frac{q}{p}\,.
\end{align*}
We now prove that in (\ref{eq:2p19}), we have strict inequality when $\lambda\neq \beta+1$\,.Thus we have to prove that for every $\lambda\in\big[{1,\frac{p}{p-1}}\big)\setminus\{{\beta+1}\}$, the following chain of equivalent inequalities are true:
\begin{align*}
&\lambda^{p-q}(\beta+1)^{q-1}-\Big({\frac{q(p-1)}{p}-\frac{q-1}{\beta+1}}\Big)\lambda^{p}>\frac{q}{p}\big({p\lambda^{p-1}-(p-1)\lambda^p}\big)&\Leftrightarrow\\
&(\beta+1)^{q-1}-\lambda^{q}\Big({\frac{q(p-1)}{p}-\frac{q-1}{\beta+1}}\Big)>\frac{q}{p}\,\lambda^{q-1}\big({p-(p-1)\lambda}\big)&\Leftrightarrow\\
&(\beta+1)^{q-1}+\lambda^{q}\frac{q-1}{\beta+1}>q\,\lambda^{q-1}&\Leftrightarrow\\
&q\,\lambda^{q-1}-(q-1)\frac{\lambda^q}{\beta+1}<(\beta+1)^{q-1}&\Leftrightarrow\\
&q\,\Big({\frac{\lambda}{\beta+1}}\Big)^{q-1}-(q-1)\Big({\frac{\lambda}{\beta+1}}\Big)^{q}<1\,, \;\forall\,\lambda\in\big[{1,\tfrac{p}{p-1}}\big)\setminus\{{\beta+1}\}\,.
\end{align*}
But this last inequality is obviously true since the function $H_q(s)=q\,s^{q-1}-(q-1)s^q$ attains its maximum value on the interval $(0,+\infty)$, at the unique point $s_0=1$, and this value is $H_q(1)=1$\,.
\end{proof}

The results of this section are thus described as follows:\\
By using the inequalities (\ref{eq:2p1}) and (\ref{eq:2p2}) we reached to the bound:
\begin{align}\label{eq:2p20}
\int_{0}^{\kappa}\bigg(\frac{1}{t}\int_{0}^{t}h\bigg)^p dt\leq \omega_p\Big({\frac{\big({\textstyle\int_{0}^{\kappa}h}\big)^p}{\kappa^{p-1}\textstyle\int_{0}^{\kappa}h^p}}\Big)^{p}\cdot\int_{0}^{\kappa}h^p
\end{align}
which holds for every $h:(0,\kappa]\longrightarrow\mb{R}^{+}$ $L^p$-integrable and non-increasing, where $0<\kappa\leq1$\,. This bound is, of course, well known (see \cite{5}, \cite{11}), but the approach that we gave in this section, will give us a direction of the methods that we shall apply in order to find estimates as in (\ref{eq:2p20}), by considering also the variable $y=\int_{0}^{\kappa}h^q$, as given. This will be done in the next section.

\bigskip

\section{Second approach}\label{sec:3}

We now determine a bound for the quantity: $I=\int_{0}^{\kappa}\big(\frac{1}{t}\int_{0}^{t}h\big)^p dt$, where $h:(0,\kappa]\longrightarrow\mb{R}^{+}$ is $L^p$-integrable, non-increasing, with $\int_{0}^{\kappa}h=x$\,,\; $\int_{0}^{\kappa}h^q=y$ and $\int_{0}^{\kappa}h^p=z$, for $1<q<p$, without ignoring the variable $y$. We move in the following manner: Remember that the following inequality is true (see the Preliminaries):
\begin{align*}
I=	\int_{0}^{\kappa}\bigg(\frac{1}{t}\int_{0}^{t}h\bigg)^p dt&\leq\frac{p(\beta+1)^q}{G}\int_{0}^{\kappa}\bigg(\frac{1}{t}\int_{0}^{t}h\bigg)^{p-q}h^q(t)\,dt\,+\nonumber\\
	&\quad+\frac{(p-q)(\beta+1)}{G}\cdot\frac{1}{\kappa^{p-1}}\bigg(\int_{0}^{\kappa}h\bigg)^p\,+\nonumber\\
	&\qquad+\frac{p(q-1)\beta}{G}\cdot\frac{1}{\kappa^{p-q}}\bigg(\int_{0}^{\kappa}h\bigg)^{p-q}\cdot\int_{0}^{\kappa}\bigg(\frac{1}{t}\int_{0}^{t}h\bigg)^q dt\,-\nonumber\\
	&\hspace{0.9cm}-\frac{p(\beta+1)^q}{G}\cdot\frac{1}{\kappa^{p-q}}\bigg(\int_{0}^{\kappa}h\bigg)^{p-q}\cdot \int_{0}^{\kappa}h^q\qquad\Rightarrow
\end{align*}
(by applying H\"{o}lder's inequality on the first term on the right)
\begin{align}\label{eq:3p1}
I&\leq\frac{p(\beta+1)^q}{G}\,I^{\frac{p-q}{p}}\cdot z^{\frac{q}{p}}+\frac{(p-q)}{G}(\beta+1)\cdot\frac{1}{\kappa^{p-1}}\cdot x^{p}\,-\nonumber\\
&\quad-\frac{p(\beta+1)^q}{G}\cdot\frac{1}{\kappa^{p-q}}x^{p-q}y     +\frac{p(q-1)\beta}{G}\,\frac{1}{\kappa^{p-q}}\,x^{p-q}y\cdot\omega_q\Big({\frac{x^q}{\kappa^{q-1}y}}\Big)^q\,,
\end{align}
where we have used the first of the inequalities (\ref{eq:2p2}).\\
Set $\delta=\Big({\frac{\int_{0}^{\kappa}\big(\frac{1}{t}\int_{0}^{t}h\big)^p dt}{\int_{0}^{\kappa}h^p}}\Big)^{1/p}=\big({\frac{I}{z}}\big)^{1/p}$\,. Thus we get:
\begin{align}\label{eq:3p2}
&\delta^p\leq \frac{p(\beta+1)^q}{G}\,\delta^{p-q}\,+\nonumber\\
&+\Bigg[{\frac{(p-q)}{G}(\beta+1)\,\frac{x^p}{\kappa^{p-1}z}-\frac{p(\beta+1)^q}{G}\,\frac{\frac{x^p}{\kappa^{p-1}z}}{\frac{x^q}{\kappa^{q-1}y}}+\frac{p(q-1)\beta}{G}\,\frac{\frac{x^p}{\kappa^{p-1}z}}{\frac{x^q}{\kappa^{q-1}y}}\cdot\omega_q\Big({\frac{x^q}{\kappa^{q-1}y}}\Big)^q}\Bigg]\nonumber\\
&=\frac{p(\beta+1)^q}{G}\,\delta^{p-q}+T_1(\beta)\,,
\end{align}
where $T_1(\beta)$ is the quantity in brackets in (\ref{eq:3p2}). Denote also $s_1=\frac{x^p}{\kappa^{p-1}z}$ and $s_2=\frac{x^q}{\kappa^{q-1}y}$\,. Note that, since  $\int_{0}^{\kappa}h=x$\,,\; $\int_{0}^{\kappa}h^q=y$\,, \; $\int_{0}^{\kappa}h^p=z  \Rightarrow \frac{x^q}{\kappa^{q-1}}\leq y\leq x^{\frac{p-q}{p-1}}\cdot  z^{\frac{q-a}{p-1}}$\,.\\
By the second inequality above we immediately get  $s_1^{q-1}\leq s_2^{p-1}$. Further $0<s_1$ and $s_2\leq 1$.\\
Define also (as we have done in Section \ref{sec:2}) : $h_{\beta}(y)=y^{p-q}-A_{\beta}y^p$\,,\; $y\geq1$, where $A_{\beta}:=\frac{G}{p(\beta+1)^q}$\,. Thus (\ref{eq:3p2}) describes the following inequality:
\begin{align}\label{eq:3p3}
h_{\beta}(\delta)\geq -\frac{G}{p(\beta+1)^q}\big({T_1(\beta)}\big)\,.
\end{align}
As it is proved in Section \ref{sec:2}, the function $h_{\beta}$ is strictly decreasing on $[1,+\infty)$, and $A_{\beta}$ satisfies:
\[A_{\beta}>\frac{p-q}{p}\,,\quad\forall\,\beta\in\big({0,\tfrac{1}{p-1}}\big)\,.\]
Since $\delta\geq 1$ and thus $h_{\beta}(\delta)\leq h_{\beta}(1)$, and also $\lim_{y\to +\infty}h_{\beta}(y)=-\infty$, we have that for every $\beta \in\big({0,\tfrac{1}{p-1}}\big)$ there exists unique $t(\beta)=t(\beta,s_1,s_2)\geq1$, for which:
\begin{align}\label{eq:3p4}
h_{\beta}(t(\beta))&=-\frac{G}{p(\beta+1)^q}\cdot T_1(\beta)\quad&\Leftrightarrow\nonumber\\
\frac{G}{p(\beta+1)^q}\,t^p(\beta)-t^{p-q}(\beta)&=\frac{G}{p(\beta+1)^q}\cdot T_1(\beta)\quad&\Leftrightarrow\nonumber\\
t^p(\beta)-\frac{p(\beta+1)^q}{G}\,t^{p-q}(\beta)&=T_1(\beta)=\nonumber\\
=\frac{p-q}{G}(\beta+1)\,s_1+\,&\frac{s_1}{s_2}\Big({\frac{p(q-1)\beta}{G}\,\omega_q(s_2)^q-\frac{p(\beta+1)^q}{G}}\Big)
\end{align}
Note that for this choice of $t(\beta)$ we obviously have $\delta\leq t(\beta)$, while all the inequalities stated above, remain to hold, even in the case $\beta=0$, (by letting $\beta\to 0^{+}$ in (\ref{eq:3p1}) ). Moreover if we consider $t=\min_{0\leq\beta<\frac{1}{p-1}}t(\beta)$ we should have that $t\leq \omega_p(s_1)$ since we have used the strong part of inequality (\ref{eq:2p2}) in order to obtain the bound $t(\beta)$, and all the remaining arguments are exactly the same, as in Section \ref{sec:2}, where we have obtain the bound $t'_1=\min_{0<\beta<\frac{1}{p-1}}t_1(\beta)=\omega_p(s_1)$. We remind also that the quantity $G=G(p,q,\beta)$ is given by $G=q(p-1)(\beta+1)-p(q-1)\,,\; \forall\,\beta$. 

Our aim is to differentiate (\ref{eq:3p4}) in terms of $\beta$, so for this reason we calculate the following derivatives (with respect to $\beta)$ :
\begin{enumerate}[label=\roman*)]
\item 
$\Big({\dfrac{\beta+1}{G}}\Big)'=\dfrac{G-(\beta+1)q(p-1)}{G^2}=\dfrac{-p(q-1)}{G^2}$\,.
\item 
$\Big({\dfrac{\beta}{G}}\Big)'=\Big({\dfrac{\beta+1}{G}}\Big)'-\Big({\dfrac{1}{G}}\Big)'=\dfrac{-p(q-1)}{G^2}+\dfrac{G'}{G^2}=$\\
${}\hspace{0.9cm}=\dfrac{-p(q-1)+q(p-1)}{G^2}=\dfrac{p-q}{G^2}$\,,\;\;  and\vspace{0.3cm}
\item \leavevmode\vspace{-0.95cm}
\begin{align*}
\Big({\dfrac{(\beta+1)^{q}}{G}}\Big)'&=\dfrac{q(\beta+1)^{q-1}G-(\beta+1)^{q}q(p-1)}{G^2}=\\
&=\dfrac{q(\beta+1)^{q-1}}{G^2}\big({G-(p-1)(\beta+1)}\big)=\\
&=\dfrac{q(\beta+1)^{q-1}}{G^2}\big({q(p-1)(\beta+1)-p(q-1)-(p-1)(\beta+1)}\big)=\\
&=\dfrac{q(\beta+1)^{q-1}}{G^2}\big({(\beta+1)(p-1)(q-1)-p(q-1)}\big)=\\
&=\dfrac{q(q-1)(\beta+1)^{q-1}}{G^2}\big({(\beta+1)(p-1)-p}\big)\,.
\end{align*}
\end{enumerate}
We restrict ourselves, in the following range of values for $\beta$ : $\beta\in\big[{0,\frac{1}{p-1}}\big)$\,. We now proceed to differentiate  (\ref{eq:3p4}), with respect to $\beta$ in the range  $\beta\in\big({0,\frac{1}{p-1}}\big)$\,.\\
The relations that will come after, remain to hold even for $\beta=0$ or $\beta=\frac{1}{p-1}$, by defining $t'(0)=\lim_{\beta\to 0^{+}}t'(\beta)$ and  $t'\big({\frac{1}{p-1}}\big)=\lim_{\beta\to {\frac{1}{p-1}}^{-}}t'(\beta)$\,.\\
We get by (\ref{eq:3p4}) that:
\begin{align}\label{eq:3p5}
&p\,t^{p-1}(\beta)\, t'(\beta)-p(p-q)\,t^{p-q-1}(\beta)\,\frac{(\beta+1)^q}{G}\, t'(\beta)-p\,t^{p-q}(\beta)\,q(q-1)\,\frac{\vartheta(\beta)}{G^2}=\nonumber\\
&=(p-q)s_1\frac{\big({-p(q-1)}\big)}{G^2}+\frac{s_1}{s_2}\bigg[{p(q-1)\frac{p-q}{G^2}\,\omega_q(s_2)^q -\frac{p\,q(q-1)\vartheta(\beta)}{G^2}}\bigg],
\end{align}
where $\vartheta(\beta)=(p-1)(\beta+1)^q-p\,(\beta+1)^{q-1}$\,.\\
We consider the following case: There exists $\beta\in\big({0,\frac{1}{p-1}}\big)$ for which $t'(\beta)=0$\,. Then (\ref{eq:3p5}) implies 
\begin{align}\label{eq:3p6}
-q\,t^{p-q}(\beta)\,\vartheta(\beta)&=-(p-q)s_1+\frac{s_1}{s_2}\Big[{(p-q)\,\omega_q(s_2)^q -q\,\vartheta(\beta)}\Big]&\Rightarrow\nonumber\\
(p-q)s_1\Big({\frac{\omega_q(s_2)^q}{s_2}-1}\Big)&=q\,\vartheta(\beta)\Big({\frac{s_1}{s_2}-t^{p-q}(\beta)}\Big)&\Rightarrow\nonumber\\
t^{p-q}(\beta)&=\frac{s_1}{s_2}-(p-q)\,\frac{s_1\cdot\alpha(s_2)}{q\,\vartheta(\beta)}\,,
\end{align}
where $\alpha(s_2)$ is defined by:
\[\alpha(s_2)=\frac{\omega_q(s_2)^q}{s_2}-1\,,\quad s_2\in (0,1]\,.\]
Also, since  $\vartheta(\beta)=(p-1)(\beta+1)^q-p\,(\beta+1)^{q-1}$ we have
\begin{align*}
\vartheta'(\beta)&=q\,(p-1)(\beta+1)^{q-1}-p(q-1)\,(\beta+1)^{q-2}\cong\\
&\cong q(p-1)(\beta+1)-p(q-1)=\\
&=G>q(p-1)-p(q-1)=p-q>0\,.
\end{align*}
Thus $\vartheta:\big({0,\frac{1}{p-1}}\big)\longrightarrow\mb{R}^{-}$ is increasing with $\vartheta(0^{+})=-1$\,,\;  $\vartheta\big({{\frac{1}{p-1}^{-}}}\big)=0$\,.

\medskip

\noindent{\bfseries{Claim 1.}} $t=t(\beta)$ satisfies:
\begin{align}\label{eq:3p7}
t^p-\dfrac{p(\beta+1)^{q}}{G}\,t^{p-q}=s_1-\dfrac{p(\beta+1)^{q}}{G}\,\frac{s_1}{s_2}+p(q-1)\,\frac{\beta}{G}\,s_1\,\alpha(s_2)\,.
\end{align}
Indeed from (\ref{eq:3p4}) it is enough to prove the following:
\begin{align*}
&\frac{p-q}{G}(\beta+1)s_1+\frac{s_1}{s_2}\bigg({\frac{p(q-1)\beta}{G}\,\omega_q(s_2)^q-\frac{p(\beta+1)^q}{G}}\bigg)=\\
&{}\hspace{2.0cm}=s_1-\frac{p(\beta+1)^q}{G}\,\frac{s_1}{s_2}+p(q-1)\frac{\beta}{G}\,s_1\bigg({\frac{\omega_q(s_2)^q}{s_2}-1}\bigg)&\Leftrightarrow\\
&\frac{p-q}{G}(\beta+1)s_1+\frac{s_1}{s_2}\,\frac{p(q-1)\beta}{G}\,\omega_q(s_2)^q=\\
&{}\hspace{2.0cm}=s_1+p(q-1)\frac{\beta}{G}\,\frac{s_1}{s_2}\,\omega_q(s_2)^q-p(q-1)\frac{\beta}{G}\,s_1&\Leftrightarrow\\
&(p-q)(\beta+1)s_1=s_1\cdot G-p(q-1)\beta\,s_1 &\Leftrightarrow\\
&(p-q)(\beta+1)=q(p-1)(\beta+1)-p(q-1)-p(q-1)\beta&\Leftrightarrow\\
&(p-q)(\beta+1)+p(q-1)(\beta+1)-p(q-1)=q(p-1)(\beta+1)-p(q-1)&\Leftrightarrow\\
&(\beta+1)(pq-p+p-q)=q(p-1)(\beta+1)\,,
\end{align*}
which is obviously true. Thus (\ref{eq:3p7}) is also true.\\
Now by (\ref{eq:3p6}) and (\ref{eq:3p7}) we deduce
\begin{align}\label{eq:3p8}
&t^p=\frac{p(\beta+1)^q}{G}\bigg({\frac{s_1}{s_2}-\frac{p-q}{q}\,\frac{s_1\,\alpha(s_2)}{\vartheta(\beta)}}\bigg)+s_1-\frac{p(\beta+1)^q}{G}\,\frac{s_1}{s_2}\,+\nonumber\\
&{}\hspace{8.4cm}+p(q-1)\,\frac{\beta}{G}\,s_1\,\alpha(s_2)=\nonumber\\
&=s_1+p(q-1)\,\frac{\beta}{G}\,s_1\,\alpha(s_2)-\frac{p(\beta+1)^q}{G}\,\frac{p-q}{q}\,\frac{s_1\,\alpha(s_2)}{\vartheta(\beta)}=\nonumber\\
&=s_1+p(q-1)\,\frac{\beta}{G}\,s_1\,\alpha(s_2)-\frac{p(\beta+1)^q}{G}\,\frac{p-q}{q}\,\frac{s_1\,\alpha(s_2)}{(p-1)(\beta+1)^q-p\,(\beta+1)^{q-1}}=\nonumber\\
&=s_1+p(q-1)\,\frac{\beta}{G}\,s_1\cdot\alpha(s_2)-\frac{p(\beta+1)}{G}\,\frac{p-q}{q}\,\frac{s_1\,\alpha(s_2)}{(p-1)(\beta+1)-p}\qquad\Rightarrow\nonumber\\
&t^p=s_1+p\,\frac{s_1}{G}\bigg[{(q-1)\beta\,\alpha(s_2)-\frac{p-q}{q}\,\frac{(\beta+1)\,\alpha(s_2)}{(p-1)(\beta+1)-p}}\bigg]\qquad\Rightarrow\nonumber\\
&t^p=s_1+\frac{p\,s_1}{G}\,\alpha(s_2)\bigg[{(q-1)\beta-\frac{p-q}{q}\,\frac{(\beta+1)}{(p-1)(\beta+1)-p}}\bigg]=\nonumber\\
&\;\;\;=s_1+\frac{p\,s_1\,\alpha(s_2)}{G}\,\bigg[{(q-1)\beta-\frac{(p-q)(\beta+1)}{q(p-1)(\beta+1)-p(q-1)-p}}\bigg]\qquad\Rightarrow\nonumber\\
&t^p=s_1+\frac{p\,s_1\,\alpha(s_2)}{G}\,\bigg[{(q-1)\beta-\frac{(p-q)(\beta+1)}{G-p}}\bigg]\,.
\end{align}
Note that $(G-p)^{-1}$ is well defined since $\beta\in\big({0,\frac{1}{p-1}}\big)$. Moreover
\begin{align*}
q\vartheta(\beta)&=q(\beta+1)^{q-1}\big({(p-1)(\beta+1)-p}\big)=\\
 &=(\beta+1)^{q-1}\big({q(p-1)(\beta+1)-p(q-1)-p}\big)=\\
 &=(\beta+1)^{q-1}(G-p)\,.
\end{align*}
Thus (\ref{eq:3p6}) becomes:
\begin{align}\label{eq:3p9}
t^{p-q}=\frac{s_1}{s_2}-(p-q)\,\frac{s_1\,\alpha(s_2)}{(\beta+1)^{q-1}(G-p)}\,.
\end{align}
\makebox[\linewidth][s]{Then we simplify the form that (\ref{eq:3p8}) has, by considering  the term}\\ $\Big[{(q-1)\beta-\frac{(p-q)(\beta+1)}{G-p}}\Big]\Big/G$, which equals to:
\begin{align}\label{eq:3p10}
\frac{\Big[{(q-1)\beta-\frac{(p-q)(\beta+1)}{\frac{q\,\vartheta(\beta)}{(\beta+1)^{q-1}}}}\Big]}{G}&=\frac{(q-1)\beta-\frac{p-q}{q}\,\frac{(\beta+1)^q}{\vartheta(\beta)}}{G}\quad\Rightarrow\nonumber\\
\frac{\Big[{(q-1)\beta-\frac{(p-q)(\beta+1)}{G-p}}\Big]}{G}&=\frac{(q-1)\beta-\frac{p-q}{q}\,\frac{(\beta+1)^q}{\vartheta(\beta)}}{G}\,.
\end{align}
\medskip

\noindent{\bfseries{Claim 2.}} Either of the quantities in (\ref{eq:3p10}) equals to $-\frac{H_p(\beta+1)}{q\,\vartheta(\beta)}$\,.\\
Indeed, we just need to prove that 
\begin{align*}
&\frac{(q-1)\beta-\frac{p-q}{q}\,\frac{(\beta+1)^q}{\vartheta(\beta)}}{G}=-\frac{H_p(\beta+1)}{q\,\vartheta(\beta)}&\Leftrightarrow\\
&\frac{q(q-1)\beta\,\vartheta(\beta)-(p-q)(\beta+1)^q}{q\,\vartheta(\beta)}=-\frac{H_p(\beta+1)\cdot G}{q\,\vartheta(\beta)}&\Leftrightarrow\\
&q(q-1)\big({(p-1)(\beta+1)^q-p(\beta+1)^{q-1}}\big)\,\beta-(p-q)(\beta+1)^q=\\
&{}\hspace{1.0cm}=\big({q(p-1)(\beta+1)-p(q-1)}\big)(-1)\big({q(\beta+1)^{q-1}-(q-1)(\beta+1)^{q}}\big)&\Leftrightarrow\\
&q(q-1)\big({(p-1)(\beta+1)-p}\big)\beta-(p-q)(\beta+1)=\\
&{}\hspace{3.2cm}=-\big({q(p-1)(\beta+1)-p(q-1)}\big)\big[{q-(q-1)(\beta+1)}\big]&\Leftrightarrow\\
&q(q-1)(p-1)(\beta^2+\beta)-pq(q-1)\beta-(p-q)(\beta+1)=\\
&{}\hspace{0.5cm}=-q^2(p-1)(\beta+1)+pq(q-1)+q(p-1)(q-1)(\beta^2+2\beta+1)\,-\\
&{}\hspace{8.5cm}-p(q-1)^2(\beta+1)&\Leftrightarrow\\
&\big[{q(q-1)(p-1)-pq(q-1)-(p-q)}\big]\beta-(p-q)=\\
&{}\hspace{2.cm}=\big[{-q^2(p-1)+2q(p-1)(q-1)-p(q-1)^2}\big]\beta\,+\\
&{}\hspace{2.7cm}+\big({-q^2(p-1)+pq(q-1)+q(p-1)(q-1)-p(q-1)^2}\big)&\Leftrightarrow\\
&\big[{q(pq-q-p+1)-p(q^2-q)-(p-q)}\big]\beta-(p-q)=\\
&{}\hspace{1.cm}=\big[{-pq^2+q^2+2q(pq-p-q+1)-p(q^2-2q+1)}\big]\beta\,+\\
&{}\hspace{1.8cm}+\big({-q^2p+q^2+pq^2-pq+q(pq-p-q+1)-p(q^2-2q+1)}\big)&\Leftrightarrow\\
&\big[{-q^2+2q-p}\big]\beta-(p-q)=\big[{-q^2+2q-p}\big]\beta+(-pq+q-pq+2pq-p)\,,
\end{align*}
which is obviously true. Thus Claim 2 is proved.

By using Claim 2 and (\ref{eq:3p8}), we obtain the following:
\begin{align}\label{eq:3p11}
t^p=s_1+\big({p\,s_1\,\alpha(s_2)}\big)\cdot\bigg({-\frac{H_p(\beta+1)}{q\,\vartheta(\beta)}}\bigg)\,.
\end{align}
But by (\ref{eq:3p6}) we have that:
\begin{align}\label{eq:3p12}
t^{p-q}-\frac{s_1}{s_2}&=-(p-q)\,\frac{s_1\cdot\alpha(s_2)}{q\,\vartheta(\beta)}\qquad\Rightarrow\nonumber\\
\frac{s_1\cdot\alpha(s_2)}{q\,\vartheta(\beta)}&=\frac{1}{p-q}\,\Big({\frac{s_1}{s_2}-t^{p-q}}\Big)\,.
\end{align}
Thus (\ref{eq:3p11}) becomes:
\begin{align}\label{eq:3p13}
	t^p&=s_1-\frac{p}{p-q}\Big({\frac{s_1}{s_2}-t^{p-q}}\Big)\cdot H_q(\beta+1)\qquad\Rightarrow\nonumber\\
	\frac{p-q}{p}\,\frac{t^p-s_1}{t^{p-q}-\frac{s_1}{s_2}}&=H_q(\beta+1)\,.
\end{align}
Define now $\tau(t)=\dfrac{p-q}{p}\,\dfrac{t^p-s_1}{t^{p-q}-\frac{s_1}{s_2}}\,,\; t\geq1\,.$\\
Now by (\ref{eq:3p4}), for $\beta=0$, we have:
\begin{align}\label{eq:3p14}
t^p(0)-\frac{p}{G(0)}\,t^{p-q}(0)&=\frac{p-q}{G(0)}\,s_1-\frac{p}{G(0)}\,\frac{s_1}{s_2}\qquad\Rightarrow\nonumber\\
t^p(0)-\frac{p}{p-q}\,t^{p-q}(0)&=s_1-\frac{p}{p-q}\,\frac{s_1}{s_2}\,.
\end{align}
Define the following function:\; $g(y)=y^p-\frac{p}{p-q}\,y^{p-q}\,,\;\; y\geq1$\,. Then $g$ is strictly increasing on $[1,+\infty)$, with $g(1)=\frac{-q}{p-q}$ and $\lim_{y\to +\infty}g(y)=+\infty$\,.
\medskip

\noindent{\bfseries{Claim 3.}} For every $(s_1,s_2)$ as above we have that:
\[s_1-\frac{p}{p-q}\,\frac{s_1}{s_2}>-\frac{q}{p-q}\,.\]
We prove the above claim as follows: Remember that $(s_1,s_2)$ satisfies:
\[s_1^{q-1}\leq s_2^{p-1}\quad\Rightarrow\quad s_2\geq s_1^{\frac{q-1}{p-1}}\,.\]
Thus for the proof of the claim it is enough to prove
\[s_1-\frac{p}{p-q}\,s_1^{\frac{p-q}{p-1}}>-\frac{q}{p-q}\,,\quad\forall\,s_1\in(0,1]\,.\]
We define $\lambda(s_1):=s_1-\frac{p}{p-q}\,s_1^{\frac{p-q}{p-1}}\,,\; s_1\in(0,1)$\,. Then it is easily seen that:
\begin{align*}
\lambda'(s_1)=1-\frac{p}{p-1}\,s_1^{-\frac{q-1}{p-1}}<1-\frac{p}{p-1}=-\frac{1}{p-1}<0\,.
\end{align*}
\makebox[\linewidth][s]{Thus $\lambda(s_1)$ is strictly decreasing on $(0,1)$, therefore $\lambda(s_1)>\lambda(1)$, or}\\ $s_1-\frac{p}{p-q}\,s_1^{\frac{p-q}{p-1}}>-\frac{q}{p-q}$, which is the desired result. \\
Now we observe that the function $\tau(t)\,,\, t\geq1$, satisfies the following properties:
\begin{enumerate}[label=\roman*)]
\item 
$\tau\big({t(0)}\big)=1$\,.\\
For this claim it is enough to show that  $\dfrac{p-q}{p}\,\dfrac{t^p(0)-s_1}{t^{p-q}(0)-\frac{s_1}{s_2}}=1$\,, which is immediate consequence of (\ref{eq:3p14}).
\item 
$\tau(t)$ is strictly increasing function of $t$ on $[1,+\infty)$. \\
Indeed we calculate 
\begin{align*}
\tau'(t)&\cong p\,t^{p-1}\Big({t^{p-q}-\frac{s_1}{s_2}}\Big)-(t^p-s_1)(p-q)\,t^{p-q-1}\cong\\
 &\cong p\,t^{q}\Big({t^{p-q}-\frac{s_1}{s_2}}\Big)-(p-q)(t^p-s_1)=\\
 &=q\,t^{p}-p\,\frac{s_1}{s_2}\,t^q+(p-q)\,s_1=:\mu(t)\,, \\
 \mu'(t)&=qp\,t^{p-1}-pq\,\frac{s_1}{s_2}\,t^{q-1}\cong t^{p-q}-\frac{s_1}{s_2}\,.
\end{align*}
But for $t>1$ and since  $s_1^{q-1}\leq s_2^{p-1}\;\Rightarrow\; \big({\frac{s_1}{s_2}}\big)^{q-1}\leq s_2^{p-q}\leq 1$\,, we have that $\mu(t)$ is strictly increasing, thus $\mu(t)>\mu(1)$, for every $t>1$, which gives:
\begin{align*}
	\mu(t)&>q-p\,\frac{s_1}{s_2}+(p-q)s_1>q-p\,\frac{s_1}{s_1^{\frac{q-1}{p-1}}}+(p-q)s_1=\\
	&=(p-q)\,s_1+q-p\,s_1^{\frac{p-q}{p-1}}=\Big({s_1-\frac{p}{p-q}\,s_1^{\frac{p-q}{p-1}}}\Big)(p-q)+q\,>\\
	&\hspace{5.2cm}>\Big({-\frac{q}{p-q}}\Big)(p-q)+q=0\,.
\end{align*}
Thus $\mu(t)>0\,,\; \forall\,t>1$\,.\\
From the above we conclude that $\tau'(t)>0\,,\; \forall\,t>1$, thus the desired result.
\end{enumerate}
By  (\ref{eq:3p13}) we have $\tau(t)=H_q(\beta+1)<1$, for every $\beta\in \big({0,\frac{1}{p-1}}\big)$ for which $t'(\beta)=0$, where $t=t(\beta)$. Thus $\beta+1=\omega_q(\tau(t))$, and  since $\tau(t(0))=1$ and $\tau$ strictly increasing, we have from $\tau(t)<1$ that $t<t(0)$.\\ Moreover
\begin{align*}
\vartheta(\beta)&=(p-1)(\beta+1)^q-p\,(\beta+1)^{q-1}=\\
 &=(p-1)\,\omega_q(\tau)^q-p\,\omega_q(\tau)^{q-1}\,.
\end{align*}
Thus by (\ref{eq:3p6}) we obtain 
\begin{align*}
t^{p-q}-\frac{s_1}{s_2}&=\frac{p-q}{q}\,\frac{s_1\,\alpha(s_2)}{p\,\omega_q(\tau)^{q-1}-(p-1)\,\omega_q(\tau)^q}\qquad\Rightarrow\\
F_{s_1,s_2}(t)&:=q\,\big({p\,\omega_q(\tau)^{q-1}-(p-1)\,\omega_q(\tau)^q}\big)\Big({t^{p-q}-\frac{s_1}{s_2}}\Big)-(p-q)\,s_1\,\alpha(s_2)=0\,.
\end{align*}
We have proved the following:
\begin{theorem}\label{thm:3p1}
Let $s_1,s_2$ fixed such that $0<s_1^{q-1}\leq s_2^{p-1}\leq1$. If there exists $\beta\in \big({0,\frac{1}{p-1}}\big)$ such that $t'(\beta)=0$, then $t=t(\beta)$ should satisfy $F_{s_1,s_2}(t)=0$, where $F_{s_1,s_2}(t)$ as given right above, and $\tau(t)$ is described previously. Moreover in this case $t(0)>1$ and $t=t(\beta)\in\big[{1,t(0)}\big)$\,.
\end{theorem}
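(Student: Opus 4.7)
The plan is to use the defining relation \eqref{eq:3p4} for $t(\beta)$ together with the first-order condition $t'(\beta)=0$ to produce a pair of equations from which $\beta$ can be eliminated entirely in favour of $t$. First I would differentiate \eqref{eq:3p4} with respect to $\beta$, using the derivatives of $(\beta+1)/G$, $\beta/G$ and $(\beta+1)^q/G$ already tabulated in items (i)--(iii). Imposing $t'(\beta)=0$ cancels every term multiplying $t'(\beta)$ and leaves a single algebraic relation between $t^{p-q}$ and $(s_1,s_2,\beta)$; this should be exactly \eqref{eq:3p6}, namely $t^{p-q} = s_1/s_2 - (p-q)\,s_1\,\alpha(s_2)/(q\,\vartheta(\beta))$.

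Next I would combine \eqref{eq:3p6} with \eqref{eq:3p4} rewritten via Claim 1 into the cleaner form \eqref{eq:3p7}, in order to extract an analogous expression purely for $t^p$. Substituting \eqref{eq:3p6} into \eqref{eq:3p7} produces the bracket $\bigl[(q-1)\beta - (p-q)(\beta+1)/(G-p)\bigr]/G$, which by Claim 2 collapses to $-H_q(\beta+1)/\bigl(q\,\vartheta(\beta)\bigr)$. One further use of \eqref{eq:3p6} to rewrite $s_1\alpha(s_2)/(q\,\vartheta(\beta))$ as $\bigl(s_1/s_2 - t^{p-q}\bigr)/(p-q)$ then converts the expression for $t^p$ into the key identity \eqref{eq:3p13}, i.e.\ $\tfrac{p-q}{p}\,\tfrac{t^p-s_1}{t^{p-q}-s_1/s_2} = H_q(\beta+1)$. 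This motivates the auxiliary function $\tau(t)$ and identifies $\beta+1 = \omega_q(\tau(t))$.

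To finish the theorem I would establish the location of $t$ using monotonicity of $\tau$. The relation \eqref{eq:3p14} defining $t(0)$ is exactly $\tau(t(0))=1$; Claim 3, whose proof reduces to checking $s_1 - \tfrac{p}{p-q}\,s_1^{(p-q)/(p-1)} > -\tfrac{q}{p-q}$ on $(0,1]$ by strict monotonicity of that one-variable expression, then gives $t(0)>1$. Since $H_q$ attains its maximum $1$ uniquely at $s_0=1$ and $\beta+1>1$, we have $\tau(t) = H_q(\beta+1) < 1 = \tau(t(0))$; strict monotonicity of $\tau$ on $[1,\infty)$, verified by reducing $\tau'(t)$ to the sign of $\mu(t) = q\,t^p - p(s_1/s_2)\,t^q + (p-q)s_1$, whose positivity at $t=1$ again rests on Claim 3, then forces $t<t(0)$, and combined with $t\geq 1$ gives $t \in [1,t(0))$. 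Substituting $\vartheta(\beta) = (p-1)\omega_q(\tau(t))^q - p\,\omega_q(\tau(t))^{q-1}$ back into \eqref{eq:3p6} and clearing denominators produces precisely $F_{s_1,s_2}(t)=0$.

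The main obstacle is pure bookkeeping: the identities in Claims 1 and 2 are arithmetic tautologies whose verification is long, and one must juggle the four quantities $t,\beta,s_1,s_2$ through several parallel forms of the same relation. The conceptual crux is recognising that the critical-point condition $t'(\beta)=0$ is equivalent to the single scalar equation $\tau(t)=H_q(\beta+1)$; once this is in hand, $\omega_q$ inverts it directly and the remaining assertions about $t \in [1,t(0))$ follow from the monotonicity properties established above.
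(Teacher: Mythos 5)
Your proposal is correct and follows essentially the same route as the paper: differentiate \eqref{eq:3p4}, use $t'(\beta)=0$ to obtain \eqref{eq:3p6}, combine with Claims 1--2 to reach $\tau(t)=H_q(\beta+1)$ (note the paper's Claim 2 misprints this as $H_p$, but its computation and \eqref{eq:3p13} confirm $H_q$), invert via $\omega_q$ and substitute back into \eqref{eq:3p6} to get $F_{s_1,s_2}(t)=0$, with $t(0)>1$ and $t<t(0)$ coming from Claim 3 and the strict monotonicity of $\tau$ exactly as in the text. The only detail left implicit is that deducing $t(0)>1$ from \eqref{eq:3p14} and Claim 3 uses the strict increase of $y\mapsto y^p-\tfrac{p}{p-q}y^{p-q}$ on $[1,\infty)$ with value $-\tfrac{q}{p-q}$ at $y=1$, which is the paper's auxiliary function $g$.
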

Note that $A_{\beta}=\frac{G(\beta)}{p(\beta+1)^{q}}=\mc{K}(\beta)>\mc{K}(0)=\frac{p-q}{p}\,,\;\forall\,\beta\in\big(0,\tfrac{1}{p-1}\big]$\,. Thus $A_{\frac{1}{p-1}}>\frac{p-q}{p}$\,, thus the function $h_{\frac{1}{p-1}}:[1,+\infty)\longrightarrow\big({-\infty,h_{\frac{1}{p-1}}(1)}\big]$  defined by $h_{\frac{1}{p-1}}(y)=y^{p-q}-A_{\frac{1}{p-1}}y^p$ is strictly decreasing on $[1,+\infty)$.\\
Thus the quantity $t\big({\frac{1}{p-1}}\big)$ is well defined, from (\ref{eq:3p4}) for $\beta=\frac{1}{p-1}$. Now define $t'\big({\frac{1}{p-1}}\big)=\lim_{\begin{smallmatrix}
\beta\to \frac{1}{p-1}\\
0<\beta<\frac{1}{p-1}
\end{smallmatrix}}t'(\beta)$. This limit is well defined for the following reasons: Remember that $t'(\beta)$ satisfies (\ref{eq:3p5}), for every $\beta\in \big({0,\frac{1}{p-1}}\big)$, that is 
\begin{align}\label{eq:3p15}
	&p\,t^{p-q-1}(\beta)\, \Big[{t^{q}(\beta)-(p-q)\,\frac{(\beta+1)^q}{G(\beta)}}\Big]\, t'(\beta)-pt^{p-q}(\beta)q(q-1)\frac{\vartheta(\beta)}{G^2(\beta)}=\nonumber\\
	&=(p-q)s_1\frac{-p(q-1)}{G^2(\beta)}+\frac{s_1}{s_2}\bigg[{p(q-1)\frac{p-q}{G^2(\beta)}\,\omega_q(s_2)^q -\frac{p\,q(q-1)\vartheta(\beta)}{G^2(\beta)}}\bigg],
\end{align}
where $G=G(\beta)=q(p-1)(\beta+1)-p(q-1)$\,.\\
Letting $\beta\to \tfrac{1}{p-1}$ in (\ref{eq:3p15}), since $G(\beta)\to qp-pq+p=p$ we  get that the right side tends to \[(p-q)s_1\frac{-p(q-1)}{p^2}+\frac{s_1}{s_2}\,p(q-1)\frac{p-q}{p^2}\,\omega_q(s_2)^q \in \mb{R}\,.\]
As we have noted above $t\big({\frac{1}{p-1}}\big)$ is well defined, and by its definition equals to $\lim_{\beta\to {\frac{1}{p-1}}^{-}}t(\beta)$. Thus by (\ref{eq:3p15}) we have as a consequence that the limit $\lim_{\beta\to {\frac{1}{p-1}}^{-}}t'(\beta)$ exists if we prove that $t^q\,(\frac{1}{p-1})-(p-q)\frac{\big({\frac{p}{p-1}}\big)^q}{p}\neq0$\,. Indeed this is the case, since
\begin{align}\label{eq:3p16}
\frac{p-q}{p}\,\Big({\frac{p}{p-1}}\Big)^q<1\leq t\,\Big({\frac{1}{p-1}}\Big)^q\,.
\end{align}
The second inequality in (\ref{eq:3p16}) is obviously true, while for the first we argue as follows:\\
For $1<q<p$ consider the $L^p$-integrable function on $(0,1]$, defined by $\delta(t)=t^{-1/p}$\,. Then since $\delta(t)$ is non-constant on $(0,1]$ we set (by H\"{o}lder's inequality):
\begin{align*}
\bigg({\int_{0}^{1}t^{-1/p}\,dt}\bigg)^q<\int_{0}^{1}t^{-q/p}\,dt\quad&\Rightarrow\quad\Big({\frac{p}{p-1}}\Big)^q<\frac{1}{1-\frac{q}{p}}=\frac{p}{p-q}\\
&\Rightarrow\quad\Big({\frac{p}{p-1}}\Big)^q\,\frac{p-q}{p}<1\,.
\end{align*}
Thus taking limits in (\ref{eq:3p5}) as $\beta \to {\frac{1}{p-1}}^-$ we have:
\begin{align}\label{eq:3p17}
	&p\,t^{p-1}\big({\tfrac{1}{p-1}}\big)\, t'\big({\tfrac{1}{p-1}}\big)-p(p-q)\,t^{p-q-1}\big({\tfrac{1}{p-1}}\big)\,\frac{\big({\tfrac{p}{p-1}}\big)^q}{p}\, t'\big({\tfrac{1}{p-1}}\big)=\nonumber\\
	&\hspace{2.0cm}=\frac{-p(p-q)(q-1)}{p^2}\,s_1+\frac{s_1}{s_2}\,\frac{p(q-1)(p-q)}{p^2}\,\omega_q(s_2)^q &\Rightarrow\nonumber\\
&t'\big({\tfrac{1}{p-1}}\big)\,t^{p-q-1}\big({\tfrac{1}{p-1}}\big)\,\bigg[{t^{q}\big({\tfrac{1}{p-1}}\big)-\frac{p-q}{p}\,\Big({\frac{p}{p-1}}\Big)^q}\bigg]=\nonumber\\
&\hspace{1.0cm}=\frac{-(p-q)(q-1)}{p^2}\,s_1+\frac{s_1}{s_2}\,\frac{(q-1)(p-q)}{p^2}\,\omega_q(s_2)^q=\nonumber\\
&\hspace{1.5cm}=\frac{(p-q)(q-1)}{p^2}\,s_1\Big({\frac{\omega_q(s_2)^q}{s_2}-1}\Big)=\frac{(p-q)(q-1)}{p^2}\,s_1\alpha(s_2)\,.
\end{align}
At this point we note that we restrict to ourselves (see the beginning of Section \ref{sec:3}) to the study of those functions $h$ which are not constant on $(0,\kappa]$. Thus $x,y$ should satisfy:
$\frac{x^q}{\kappa^{q-1}}<y\;\Rightarrow\; s_2=\frac{x^q}{\kappa^{q-1}y}<1$\,.\\
It is obvious that the function $\alpha(s_2)=\big({\frac{\omega_q(s_2)^q}{s_2}-1}\big)$ is strictly decreasing on $s_2 \in (0,1]$ so, $\alpha(s_2)>\alpha(1)=0$. Thus by (\ref{eq:3p17}) and (\ref{eq:3p16}) we  get that $t'\big({\tfrac{1}{p-1}}\big)>0$\,.\\
Again now by (\ref{eq:3p5}), or (\ref{eq:3p15}) we get that $t'(0):=\lim_{\beta\to0^{+}}t'(\beta)$ exists if $t^q(0)\neq (p-q)\,\frac{1}{G(0)}=1$\,. But $t(0)$ satisfies (by its definition) : $\delta^p=\frac{\int_{0}^{\kappa}\big(\frac{1}{t}\int_{0}^{t}h\big)^p dt}{\int_{0}^{\kappa}h^p}\leq t^p(0)$, and since $h$ is not constant we easily see that $\delta>1$, which gives $t(0)>1$. Thus $t'(0):=\lim_{\beta\to0^{+}}t'(\beta)$ is well defined. \\
Now we consider the following general cases:
\begin{enumerate}[label=\Roman*)]
\item 
$t'(0)<0$. Then since $t'\big({\tfrac{1}{p-1}}\big)>0\;\Rightarrow\; \exists\,\beta\in \big({0,\frac{1}{p-1}}\big):\; t'(\beta)=0$, for which if we set $t=t(\beta)$, $t$ satisfies $F_{s_1,s_2}(t)=0$. But as can be easily seen the function $F_{s_1,s_2}(\cdot)$ is strictly increasing in $[1,+\infty)$, that is $F$ is one to one. Thus $t(\beta)$ is uniquely determined when $t'(\beta)=0$. \\
This easily gives in this case that for a $\beta\in \big({0,\frac{1}{p-1}}\big)$ such that $t'(\beta)=0$ we have that 
\[t=t(\beta)=\min\Big\{{t(\gamma)\;:\;\gamma\in\big[0,\tfrac{1}{p-1}\big]}\Big\}\]
and of course $F_{s_1,s_2}(t)=0$. This case is then completely studied. We turn now to the other cases.
\item 
$t'(0)>0$. Then we must have that $t'(\beta)>0$,\; $\forall\,\beta\in \big({0,\frac{1}{p-1}}\big)$, otherwise there would exist $\beta\in \big({0,\frac{1}{p-1}}\big)$ for which $t(\beta)>t(0)$ with $t'(\beta)=0$. But the condition $t'(\beta)=0$ implies $t(\beta)<t(0)$ as we have seen in the proof of Theorem \ref{thm:3p1}. Thus in this case 
\[\min\Big\{{t(\gamma)\;:\;\gamma\in\big[0,\tfrac{1}{p-1}\big]}\Big\}=t(0)\,.\]
\item 
$t'(0)=0$. There does not exist $\beta\in \big({0,\frac{1}{p-1}}\big)$ for which $t'(\beta)=0$ and $t(\beta)<t(0)$, by using the same arguments that are mentioned above. Thus  $t(\beta)\geq t(0)$, \; $\forall\,\beta\in \big({0,\frac{1}{p-1}}\big)$, since otherwise would exist $\beta\in \big({0,\frac{1}{p-1}}\big)$ with $t(\beta)<t(0)$ that is $t(\gamma)$ would be strictly decreasing on a subinterval of  $\big({0,\frac{1}{p-1}}\big)$. This fact, and the observation that $t'\big({\frac{1}{p-1}}\big)>0$ would then give the existence of an $\beta_1\in\big({0,\frac{1}{p-1}}\big)$ for which $t'(\beta_1)=0$ and $t(\beta_1)<t(0)$ which is a case that is excluded. Thus $t(\beta)\geq t(0)$, \; $\forall\,\beta\in \big({0,\frac{1}{p-1}}\big)$ and we get in this case also that
\[\min\Big\{{t(\gamma)\;:\;\gamma\in\big[0,\tfrac{1}{p-1}\big]}\Big\}=t(0)\,.\]
Moreover we can argue alternatively for the third case as follows: If $t'(0)=0$, then $F_{s_1,s_2}(t(0))=0$. Thus we must have that $t'(\beta)>0$, \; $\forall\,\beta\in \big({0,\frac{1}{p-1}}\big)$.\\
Indeed if there exists $\beta\in \big({0,\frac{1}{p-1}}\big)$ with $t'(\beta)\leq 0$, then since\\
$t'\big({\tfrac{1}{p-1}}\big)>0\;\Rightarrow\; \exists\,\beta_1\in \big[{\beta,\frac{1}{p-1}}\big):\; t'(\beta_1)=0\;\Rightarrow\;F_{s_1,s_2}(t(\beta_1))=0$ and $t(\beta_1)<t(0)$, as already proved. Thus $F_{s_1,s_2}(t(0))=0=F_{s_1,s_2}(t(\beta_1))$ with $t(\beta_1)<t(0)$ which can not be true. Thus $t'(\beta)>0$, \; $\forall\,\beta\in \big({0,\frac{1}{p-1}}\big)$ and as consequence 
\[\min\Big\{{t(\gamma)\;:\;\gamma\in\big[0,\tfrac{1}{p-1}\big]}\Big\}=t(0)\,,\]
in case where $t'(0)=0$.
\end{enumerate}
Note that in case where $t'(0)=0$ we get by following the lines of the proof of Theorem \ref{thm:3p1}, that:
\[\tau(t(0))=H_q(1)=1\,.\]
Thus $F_{s_1,s_2}(t(0))=0$ (see right above the statement of  Theorem \ref{thm:3p1}).\\
Note also that in case where $t'(0)>0\;\Rightarrow\; F_{s_1,s_2}(t(0))<0$.\\
The above claim is true and for this we argue as follows:\\
If $t'(0)>0$, then by (\ref{eq:3p5}) or (\ref{eq:3p15}) we have 
\begin{align}\label{eq:3p18}
	&p\,t^{p-q-1}(0)\, t'(0)\, \big[{t^{q}(0)-1}\big]=\nonumber\\
	&\qquad=p\,t^{p-q}(0)\,q(q-1)\frac{-1}{(p-q)^2}+(p-q)\,s_1\frac{-p(q-1)}{(p-q)^2}\,+\nonumber\\
	&\hspace{3.0cm}+\frac{s_1}{s_2}\bigg[{p(q-1)\frac{p-q}{(p-q)^2}\,\omega_q(s_2)^q -\frac{p\,q(q-1)(-1)}{(p-q)^2}}\bigg]\,.
\end{align}
But since $t(0)$ bounds the quantity $\delta=\big({\frac{I}{z}}\big)^{1/p}=\Big({\frac{\int_{0}^{\kappa}\big(\frac{1}{t}\int_{0}^{t}h\big)^p dt}{\int_{0}^{\kappa}h^p}}\Big)^{1/p}$ and $h$ is considered to be non-constant we have $t(0)>1$, and since also $t'(0)>0$, we obtain by (\ref{eq:3p18})that its right hand side is positive, that is 
\begin{align}\label{eq:3p19}
&-q\,t^{p-q}(0)-(p-q)\,s_1+\frac{s_1}{s_2}\big[{(p-q)\,\omega_q(s_2)^q+q}\big]>0&\Rightarrow\nonumber\\
&q\,t^{p-q}(0)<(p-q)\,\frac{s_1}{s_2}\,\omega_q(s_2)^q-(p-q)\,s_1+ q\,\frac{s_1}{s_2}&\Rightarrow\nonumber\\
&q\,\Big({t^{p-q}(0)-\frac{s_1}{s_2}}\Big)<(p-q)\,s_1\cdot\alpha(s_2)\,.
\end{align}
But then by definition of  $F_{s_1,s_2}(t(0))=$\\
$q\,\big({p\,\omega_q(\tau(t(0)))^{q-1}-(p-1)\,\omega_q(\tau(t(0)))^q}\big)\Big({t(0)^{p-q}-\frac{s_1}{s_2}}\Big)-(p-q)\,s_1\,\alpha(s_2)$
and since $\tau(t(0))=1$, and (\ref{eq:3p19}) is true, we get $F_{s_1,s_2}(t(0))<0$. [In the same way we prove that: if  $t'(0)=0$, then $F_{s_1,s_2}(t(0))=0$].

At this point we mention that accordingly to the results of this section, we have that whenever $0<\frac{x^q}{\kappa^{q-1}}<y\leq x^{\frac{p-q}{p-1}}\cdot z^{\frac{q-1}{p-1}}\;\Leftrightarrow\; 0<s_1^{\frac{q-1}{p-1}}\leq s_2<1$, there is defined a constant $t=t(s_1,s_2)$ for which if $h:(0,\kappa]\longrightarrow\mb{R}^{+}$ satisfies  $\int_{0}^{\kappa}h=x$\,,\; $\int_{0}^{\kappa}h^q=y$ and $\int_{0}^{\kappa}h^p=z$ then
\begin{align}\label{eq:3p20}
	\int_{0}^{\kappa}\bigg(\frac{1}{t}\int_{0}^{t}h\bigg)^p dt\leq t^p(s_1,s_2)\cdot\int_{0}^{\kappa}h^p
\end{align}
where $t(s_1,s_2)=t$ is the greatest element of $\big[{1,t(0)}\big]$ for which $F_{s_1,s_2}(t)\leq 0$. Moreover for each such fixed $s_1,s_2$ 
\[t=t(s_1,s_2)=\min\Big\{{t(\beta)\;:\;\beta\in\big[0,\tfrac{1}{p-1}\big]}\Big\}\]
where $t(\beta)=t(\beta,s_1,s_2)$ is defined by equation (\ref{eq:3p4}). That is we have found a constant $t=t(s_1,s_2)$ for which inequality (\ref{eq:3p20}) is satisfied for all $h:(0,\kappa]\longrightarrow\mb{R}^{+}$ as mentioned above. Note that $s_1,s_2$ depend by a certain way on $x,y,z$, namely $s_1=\frac{x^p}{\kappa^{p-1}z}$,  $s_2=\frac{x^q}{\kappa^{q-1}y}$ and $F_{s_1,s_2}(\cdot)$ is given in terms of $s_1,s_2$.

\newpage

\vspace{50pt}
\noindent Nikolidakis Eleftherios\\
Assistant Professor\\
Department of Mathematics \\
Panepistimioupolis, University of Ioannina, 45110\\
Greece\\
E-mail address: enikolid@uoi.gr

\end{document}